\newcommand{\Sch}{\mathscr Sch}
\newcommand{\Reg}{\mathscr Reg}
\newcommand{\Sm}{\mathscr Sm}
\DeclareMathOperator{\SH}{SH}
\DeclareMathOperator{\Htp}{\mathscr H}
\DeclareMathOperator{\Htpp}{\mathscr H_\bullet}
\DeclareMathOperator{\V}{\mathscr V}
\DeclareMathOperator{\rk}{rk}
\newcommand{\SSp}{\mathbb S}
\newcommand{\E}{\mathbb E}
\newcommand{\un}{\mathbbm 1}
\newcommand{\KGL}{\mathbf{KGL}}
\newcommand{\KSp}{\mathbf{KSp}}
\newcommand{\KQ}{\mathbf{GW}}
\newcommand{\KW}{\mathbf{W}}
\newcommand{\HMW}{\mathbf{H}_{\mathrm{MW}}}
\newcommand{\HM}{\mathbf{H}_{\mathrm M}}
\DeclareMathOperator{\HH}{\mathbf H}
\newcommand{\uKMW}{\underline{\mathrm{K}}^{\mathrm{MW}}}
\newcommand{\HW}{\mathrm{W}}
\newcommand{\piaone}{\boldsymbol{\pi}^{{\mathbb A}^1}}
\newcommand{\GL}{\mathrm{GL}}
\newcommand{\SL}{\mathrm{SL}}
\newcommand{\Sp}{\mathrm{Sp}}
\newcommand{\thom}{\mathfrak t}
\DeclareMathOperator{\ch}{ch}
\DeclareMathOperator{\bo}{bo}
\DeclareMathOperator{\Hom}{Hom}
\DeclareMathOperator{\Aut}{Aut}
\DeclareMathOperator{\End}{End}
\DeclareMathOperator{\CH}{CH}
\newcommand{\wCH}{\widetilde{\mathrm{CH}}{}}
\DeclareMathOperator{\GW}{GW} 
\DeclareMathOperator{\W}{W} 
\DeclareMathOperator{\uW}{\underline W} 
\DeclareMathOperator{\Spec}{Spec}
\DeclareMathOperator{\Sym}{Sym}
\newcommand{\KM}{\mathrm{K}^{\mathrm{M}}}
\newcommand{\KMW}{\mathrm{K}^{\mathrm{MW}}}
\newcommand{\dtw}[1]{\langle #1 \rangle}
\DeclareMathOperator{\Th}{Th}
\DeclareMathOperator{\K}{K}
\newcommand{\derR}{\mathbf{R}}
\newcommand{\NN} {\mathbb N}
\newcommand{\ZZ} {\mathbb Z}
\newcommand{\QQ} {\mathbb Q}
\newcommand{\RR} {\mathbb R}
\newcommand{\ZZp} {\mathbb Z[\sfrac 1 2]}
\renewcommand{\AA} {\mathbb A}
\newcommand{\PP} {\mathbb P}
\newcommand{\HP}{\mathrm{H}\mathbb{P}}
\newcommand{\GG} {\mathbb{G}_m}
\newcommand{\zar}{\mathrm{Zar}}
\newcommand{\cU}{\mathfrak U}
\newcommand{\cV}{\mathfrak V}
\newcommand{\cW}{\mathfrak W}
\newcommand{\cH}{\mathfrak H} 
\title{The Borel character}
\author{Fr\'ed\'eric D\'eglise}
\address{Institut de Math \'ematiques de Bourgogne, UMR 5584, CNRS, Universit\'e Bourgogne Franche-Comt\'e, F-21000 Dijon, France}
\email{frederic.deglise@ens-lyon.fr}
\urladdr{http://perso.ens-lyon.fr/frederic.deglise/}
\author{Jean Fasel}
\address{Institut Fourier - UMR 5582, Universit\'e Grenoble-Alpes, CS 40700, 38058 Grenoble Cedex 9, France}
\email{Jean.Fasel@univ-grenoble-alpes.fr}
\urladdr{https://www-fourier.univ-grenoble-alpes.fr/~faselj/}
\date{\today}
\newtheorem{thm}{Theorem}[subsection]
\newtheorem{prop}[thm]{Proposition}
\newtheorem{lm}[thm]{Lemma}
\newtheorem{cor}[thm]{Corollary}
\newtheorem{thmi}{Theorem}
\theoremstyle{remark} 
\newtheorem{rem}[thm]{Remark}
\newtheorem{ex}[thm]{Example}
\theoremstyle{definition} 
\newtheorem{df}[thm]{Definition}
\newtheorem{num}[thm]{}
\numberwithin{equation}{thm}
\newtheorem{thm*}{Theorem}
\begin{document}

\begin{abstract}
The main purpose of this article is to define a \emph{quadratic} analog of the Chern character,
 the so-called Borel character, which identifies rational higher Grothendieck-Witt groups 
 with a sum of rational MW-motivic cohomologies and rational motivic cohomologies.
We also discuss the notion of ternary laws due to Walter,
 a quadratic analog of formal group laws, and compute what we call the additive ternary laws,
 associated with MW-motivic cohomology. Finally, we provide an application of the Borel character by showing that the Milnor-Witt $K$-theory of a field $F$ embeds into suitable higher Grothendieck-Witt groups of $F$ modulo explicit torsion.
\end{abstract}

\maketitle

\setcounter{tocdepth}{1}
\tableofcontents

\section{Introduction}

\subsection*{Quadratic forms, topology and $\AA^1$-homotopy}

The K-theoretic study of quadratic forms has its origin in Bass' 1965-66 lectures 
 at the Tata institute (see \cite{Bass}),
 building on Wall's invariant of a quadratic form;
 the zeroth and first groups of what was later called \emph{hermitian K-theory} appeared there.
 It was quickly linked with the so-called "surgery problem"
 by Wall in \cite{Wall}, where the L-group variant was introduced. 
 The term hermitian K-theory, for Bass' initial groups and their higher version,
 was introduced by Karoubi and Villamayor in their CRAS paper \cite{KarVill} dated from 1971.
 One year earlier, in his famous paper \cite{Milnor}, Milnor provided the first links between K-theory defined via symbols
 (\emph{i.e.} Milnor K-theory), Galois cohomology and Witt groups, sowing the seed of
 what has become \emph{motivic homotopy theory}.
 The theory of Morel and Voevodsky --- introduced in 2000 and originally called $\AA^1$-homotopy theory ---
 has soon revealed its connections with quadratic forms under the leadership of Morel.
 Indeed, the analysis of Voevodsky's proof of Milnor conjecture led Morel to
 the main computation of $\AA^1$-homotopy theory up to now: the identification of the $0$-th stable homotopy group of the sphere spectrum
 with the Grothendieck-Witt ring of the base field $k$,
 and further of the $\ZZ$-graded $0$-th stable homotopy group\footnote{that is, with respect to the $\GG$-grading: recall
 the formula $\pi_0(S^0)_n:=[S^0,\GG^n]^{stable}$.} with the so-called Milnor-Witt K-theory of $k$.
 See \cite[Cor. 1.25]{MorLNM}.

Before going further into the quadratic part of motivic homotopy theory,
 let us go back in history for a moment; to one of the sources of Beilinson's program,
 the \emph{Chern character}. From the initial point of view, say over a smooth $k$-variety $X$
 (more generally, a regular scheme $X$),
 this is a rational ring isomorphism between the Grothendieck group of vector bundles over $X$
 and the Chow ring of $X$; an incarnation of Serre's intersection Tor-formula.
 Grothendieck initial's breakthrough has motivated a thorough line of research in algebraic topology,
 which tries to classify spectra (\emph{i.e} representable cohomology theories)
 in terms of their characteristic classes.
 After Quillen and Adams, to any (complex) oriented spectra is associated a \emph{formal group law} $F(x,y)$
 which expresses the behavior of its associated first Chern classes with respect to tensor products:
For line bundles $L$ and $L'$, we have
\begin{equation}\label{eq:FGL}\tag{FGL}
c_1(L \otimes L')=F(c_1(L),c_1(L')).
\end{equation}
This gives the following classical results:
\begin{itemize}
\item singular cohomology is the universal such theory with additive FGL;
\item complex K-theory is the universal one with multiplicative FGL;
\item cobordism is the universal one with the universal FGL.
\end{itemize}
The topological Chern character is then interpreted as the unique rational
 morphism of oriented spectra from the multiplicative one to the additive one.\footnote{Which,
 in terms of formal group laws corresponds to the exponential power series.}
 Further, rationally, all cohomologies are "ordinary": a direct sum of copies of
 singular cohomologies, in particular oriented.

The notion of oriented cohomology theory was naturally extended to $\AA^1$-homotopy theory,
 giving the following table of analogies:\footnote{The universality of motivic cohomology
 is obtained over any base in \cite{CD3}; over a singular base, Quillen algebraic K-theory has to be replaced
 by homotopy invariant K-theory \cite{CisKth}; the universality of the FGL associated with algebraic cobordism
 is due to Levine and Morel \cite{LM}.}
\begin{center}
\begin{tabular}{c|c|c}
topology & geometry & FGL \\
\hline
singular cohomology & motivic cohomology & additive \\
complex K-theory & algebraic K-theory & multiplicative \\
complex cobordism & algebraic cobordism & universal
\end{tabular}
\end{center}
In the motivic context, the Chern character was constructed by Riou (cf. \cite{Riou}),
 extending the initial work of Gillet and Soul\'e (cf. \cite{SouOp}).
 Despite this appealing analogy, in rational stable $\AA^1$-homotopy not all
 cohomologies reduce to motivic cohomology. For example, Chow-Witt groups
 are not oriented in general. Back to our starting point,
 Hermitian K-theory, though representable (over regular bases) is also non orientable,
 even with rational coefficients.

\subsection*{Panin-Walter weak orientations}

Motivated by these examples, Panin and Walter introduced in a series of fundamental papers weaker notion of orientation (\cite{PW1, PW2}).
 Recall that an orientation on a ring spectrum $\E$ in the stable homotopy category $\SH(S)$
 can be expressed as the data for each vector bundle $V/X$ over a smooth $S$-scheme $X$ of rank $n$
 of an isomorphism, called the Thom isomorphism:
$$
\thom(V):\E^{*,*}(\Th(V)) \xleftarrow{\ \sim\ } \E^{*-2n,*-n}(X)
$$
where $\E^{**}$ is the associated (bigraded) cohomology, and $\Th(E)=E/E^\times$ is the Thom space of $E$.
 The idea of Panin and Walter is to ask for the existence of Thom isomorphisms only for a restricted
 class of vector bundles; namely, those corresponding to $G$-torsors for linear algebraic groups
 such as $\SL_n$ or $\Sp_{2r}$ (in which case, $n=2r$ is even). This gives rise to the notions
 of $\SL$-orientation and $\Sp$-orientation
 (the later is weaker; see Definition~\ref{df:worientations} for details).
 The extraordinary cohomology theories mentioned above fulfill this new axiomatization.
 Moreover, they are organized in mirror of the classically oriented cohomologies,
 as described in the following table:
\begin{center}
\begin{tabular}{c|c}
$\GL$-oriented & $\SL$-oriented \\
\hline
motivic cohomology & MW-motivic cohomology \\
algebraic K-theory & hermitian K-theory/higher GW-theory \\
algebraic cobordism & special linear cobordism.
\end{tabular}
\end{center}
The term \emph{MW-motivic cohomology} stands for Milnor-Witt cohomology.
 It is to Chow-Witt groups what motivic cohomology groups are to usual Chow groups.
 We will denote by $\HMW R_S$ the ring spectrum which represent $R$-linear 
 MW-motivic cohomology over a scheme $S$, and refer the reader to our conventions
 at the end of this introduction for more precision.
 Note also that higher GW-theory is a shortcut for \emph{higher Grothendieck-Witt groups}.
 This terminology is used in \cite{ST}. Note that higher GW-theory agree with
 K-theory of symplectic (resp. symmetric) bundles in bidegree $(4n+2,2n+1)$ 
 (resp. $(4n,2n)$).\footnote{At the moment, it is only defined for $\ZZp$-schemes
 so that all our results concerning higher GW-theory will be stated under this
 assumption.}
 
Let us focus on the notion of $\Sp$-orientation in this introduction.
 Recall that an $\Sp$-torsor corresponds to a vector bundle $V$ equipped with a non-degenerate symplectic form $\psi$,
 that we call \emph{symplectic bundles}. Panin and Walter have associated to any symplectic bundle
 a quaternionic projective bundle $\HP(V,\psi)$, equipped with a canonical rank 2 symplectic bundle
 (see Paragraph~\ref{num:sp_pb_thm} for details) --- below, $\HP^\infty$ denotes the infinite dimensional
quaternionic projective space.
 Given these considerations, the beauty of $\Sp$-orientations, say on a ring spectrum $\E$,
 is to be perfectly analogous to classical orientations, as summarized in the following
 table
\begin{center}
\begin{tabular}{c|c|c}
 & $\GL$ & $\Sp$ \\
\hline
orientation as a class & $c \in \E^{2,1}(\PP^\infty)$ & $b \in \E^{4,2}(\HP^\infty)$ \\
Thom classes of & vect. bundles & symplectic vect. bundles \\
proj. bundle formula & proj. bundle & quaternionic proj. bundle \\
characteristic classes & Chern classes $c_i$ & Borel classes $b_i$. \\
\end{tabular}
\end{center}
We refer the reader to Section~\ref{sec:worientations} for details on the notions and definitions 
appearing on the right-hand side.
 Given this perfect table, one is naturally led to wonder what is the analog of formal group laws in
 in the symplectically oriented case. The problem with the formula \eqref{eq:FGL} in the symplectic case
 is that a tensor product of two symplectic bundles is not a symplectic bundle: indeed,
 the tensor product of two symplectic forms is not symplectic, but symmetric.
 However, a triple product of symplectic forms is again symplectic.

\subsection*{Walter's ternary laws}
This led to the notion of \emph{ternary law} which we introduce in this paper
 following an unpublished work of Walter.
 The idea is formally identical to that of formal group laws,
 except that we work with $P=\HP^\infty_S$ over some base $S$
 and triple products.
 Let us give the formula for the comfort of the reader (see Definition~\ref{df:ternary}).
 On $P^3$, we get three tautological symplectic bundles $\cU_1$, $\cU_2$, $\cU_3$.
 Then $\cU_1 \otimes \cU_2 \otimes \cU_3$ has rank $8$, and admits $4$ non-trivial
 Borel classes. In particular, one associates to an $\Sp$-oriented ring spectrum $\E$
 over $S$ four power series in three variables $x,y,z$, say:
$$
F_l(x,y,z)=\sum_{i,j,k \geq 0} a^l_{ijk}.x^iy^jz^k, l=1,2,3,4
$$
defined by the formula in $\E^{**}(S)[[b_1(\cU_1),b_1(\cU_2),b_1(\cU_3)]]$:
\begin{align*}
F_l\big(b_1(\cU_1),b_1(\cU_2),b_1(\cU_3)\big):=b_l(\cU_1 \otimes \cU_2 \otimes \cU_3).
\end{align*}
The situation is thus notably more complicated than the $\GL$-oriented case.
 Nevertheless, one can derive some relations amongst the coefficients $a_{ijk}^l$
 that we summarize here:
\begin{itemize}
\item \textit{Degree}: $a_{ijk}^l$ is a cohomology class in $\E^{4d,2d}(S)$, $d=l-i-j-k$. 
\item \textit{Symmetry} (Par.~\ref{num:symmetry}): for all integers $i,j,k$,
 $a_{ijk}^l=a_{jik}^l=a_{ikj}^l$.
\item \textit{Neutral identities}, (Prop.~\ref{prop:ternary_monomials}):
 when $\E$ is $\SL$-oriented, one gets
\begin{align*}
& i \neq l \Rightarrow a^l_{i00}=0, \\
& a^1_{100}=a^3_{300}=2h, \ a^2_{200}=2(h-\epsilon), \ a^4_{400}=1, \\
& {\sum}_{i=0}^r a^4_{i,r-i,0}=0,
\end{align*}
where $h$ and $\epsilon$ are the image of natural endomorphisms of the sphere spectrum
 over $S$ via the ring map $\End(\SSp_S) \rightarrow \E^{00}(S)$.\footnote{Note that
 these elements should be interpreted as the generators of the free abelian group
 $\GW(\ZZ)=\langle \epsilon,h \rangle$.
 As quadratic forms, $h=\langle 1,-1\rangle$ and $\epsilon=-\langle -1\rangle$.
 There is a canonical map $\GW(\ZZ) \rightarrow \End(\SSp_\ZZ)$ which is an isomorphism
 up to torsion.}
\end{itemize}
Our first contribution is to determine the ternary law associated
 with MW-motivic cohomology. 
\begin{thmi}[See Th.~\ref{thm:HMW_ternary} and Cor.~\ref{cor:HMW_ternary}]\label{thm:introA}
Let $R$ be a ring of coefficients and $S$ a scheme such that one of the following conditions
 hold:
\begin{enumerate}
\item[(a)] $R=\ZZ$, $S$ is a scheme over a perfect field $k$ of characteristic $\neq 2,3$.
\item[(b)] $R=\QQ$, $S$ is an arbitrary scheme.
\end{enumerate}
Then the $\SL$-oriented ring spectrum $\HMW R_S$ has the following ternary laws:
$$
F_l(x,y,z)=
\begin{cases}
2h.\sigma(x), & l=1, \\
2(h-\epsilon).\sigma(x^2)+2h.\sigma(xy), & l=2, \\
2h.\sigma(x^3)+2h.\sigma(x^2y)+8(2h-\epsilon).xyz, & l=3, \\
\sigma(x^4)-2h.\sigma(x^3y)+2(h-\epsilon).\sigma(x^2y^2)+2h.\sigma(x^2yz), & l=4.
\end{cases}
$$
\end{thmi}
Given the preceding tables of analogy,
 these ternary laws are the analog of the additive formal group law
 and we call it the \emph{additive ternary laws}.
 The theorem ultimately reduces to the case (a),
 and to the "Witt part" of MW-motivic cohomology cohomology, which represents unramified Witt cohomology.
 In this later case, we are able to compute the relevant Borel classes of triple products
 of symplectic rank $2$ bundles, based on previous computations of Levine (see \cite{LevineEuler})
 and on a geometric determination of the associated symplectic form
 (this is where we have to assume that $6 \in k^\times$; see Appendix~\ref{sec:appendix}).

\subsection*{The Borel character}

The main contribution of our paper is the construction of the symplectic analog of the Chern character,
 that we call the \emph{Borel character}.
 Recall that the former was introduced by Grothendieck as a bridge between $K_0$-theory and Chow groups,
 later extended to higher K-theory and higher Chow groups.
 In the interpretation of Riou \cite{Riou}, it appears as an isomorphism between the rational
 (homotopy invariant) K-theory spectrum and the periodized rational motivic cohomology spectrum
 (to reflect Bott periodicity of K-theory). It can also be interpreted as the unique isomorphism between
 the universal rational ring spectra respectively with multiplicative and additive formal group laws,
 reflecting now the exponential map between these formal group laws.

The method of Riou for building the Chern character uses the classical toolkit for studying cohomological
 operations in stable homotopy, as pioneered by Adams.
 In this setting, there is an important conceptual distinction between the unstable and stable operations.
 In his fundamental work, Riou did not only compute all unstable operations in algebraic K-theory,
 but he also gave criterion so that these operations can be lifted to stable ones.

In our paper, we extend Riou's method to the symplectic case, replacing K-theory by hermitian K-theory.
 Moreover, we slightly extend the domain of applicability of Riou's result by determining all possible natural
 transformation, for presheaves over the category of smooth $S$-scheme $\Sm_S$,
 between the hermitian K-theory $\KSp_0$
 and some cohomology group with coefficients in an arbitrary $\Sp$-oriented cohomology. 
 More precisely:
\begin{thmi}[see Theorems~\ref{thm:set_symplectic_op} and \ref{thm:stable_symplectic_op}]\label{thm:introB}
Let $\E$ be an $\Sp$-oriented ring spectrum over a regular $\ZZp$-scheme $S$, with Borel classes $b_i$
 and ring of coefficients $\E^{**}=\E^{**}(S)$. Let $(n,i) \in \ZZ^2$ be a pair of integers.
\begin{enumerate}
\item The following application is a bijection:
\begin{align*}
\big(E^{**}[[t_r,r \geq 1]]^{(n,i)}\big)^\ZZ
 & \rightarrow \Hom_{\mathrm{Sets}}(KSp_0,\E^{n,i}) \\
(F_r)_{r \in \ZZ}
 & \mapsto \{(\cV,r) \mapsto F_r\big(b_1(\cV),\hdots,b_r(\cV),0,\hdots\big)\}
\end{align*}
where $E^{**}[[t_r,r \geq 1]]^{(n,i)}$ denotes the formal power series with coefficients in the graded ring $\E^{**}$,
 in the indeterminate $t_r$, which are homogeneous of degree $(n,i)$, each $t_r$ being given degree $(4,2)$;
 $\cV$ is a symplectic bundle over some $X$ in $\Sm_S$, and $r$ its rank.
\item The following application is an isomorphism of bigraded abelian groups:
$$
E^{**}[[b]]
 \rightarrow \Hom_{\mathrm{Ab}}(KSp_0,\E^{*,*}), b^n \mapsto \tilde \chi_{2n}:KSp_0 \rightarrow \E^{4n,2n}
$$
where $b$ is an indeterminate whose bidegree is $(4,2)$;
 $\tilde \chi_n$ is the natural transformation defined on a symplectic bundle $\cV$ over some
 $X$ in $\Sm_S$ by the formula --- see \eqref{eq:determinant_tilde_chi}:
$$
\tilde \chi_{2n}(\cV)=
\left|
\raisebox{0.5\depth}{
\xymatrix@=0.1ex{
b_1 & 1\ar@{-}[rrrddd] & && \ar@{}|/-30pt/0[lllldddd]\\
2b_2\ar@{.}[ddd] & b_1\ar@{-}[rrrddd] && & \\
& b_2\ar@{-}[rrdd]\ar@{.}[dd] & & & \\
&  & & & 1 \\
nb_n & b_{n-1}\ar@{.}[rr] & & b_2 & b_1
}
}
\right|.
$$
\end{enumerate}
\end{thmi}
As a matter of terminology, we call the morphisms in (1) (resp. (2))
 the set (resp. groups) of \emph{unstable} (resp. \emph{additive}) \emph{symplectic operations}
 (or simply $\Sp$-operation)
 on $\E$ of bidegree $(n,i)$.
The proof follows the strategy designed by Riou, adapted to the $\Sp$-oriented case.
 We note that some form of the first assertion already appears
 in the foundational work of Panin and Walter \cite[Th. 11.4]{PW2}.

The next step is to determine \emph{stable operations}, i.e.
natural transformations of representable cohomology theories
 compatible with the stability isomorphism. In contrast with the classical situation,  it is more convenient to consider the
 (pointed) sphere $H_S:=(\HP^1_S,\infty)^{\wedge,2}=\un(4)[8]$
 in the case of $\Sp$-oriented ring spectra--- this is justified by
 the $(8,4)$-periodicity of the symplectic K-theory spectrum $\KSp$. In this setting and along
 classical lines, the stability class associated with any ring spectra $\E$ over a base scheme $S$
 is the structural class $\sigma^\E_S \in \tilde \E^{8,4}(H_S)$ in reduced cohomology
 such that for any smooth $S$-scheme $X$,
 the following exterior product map is an isomorphism:
$$
\tilde \E^{n,i}(X_+) \xrightarrow{\gamma^\E_\sigma} \tilde \E^{n+8,i+4}(H_S \wedge X_+).
$$
When $\E$ is $\Sp$-oriented, one has $\sigma^\E_S=b_1(\cU_1).b_1(\cU_2)$ where $\cU_i$ is the
 tautological symplectic bundle on the $i$-factor of $H_S$.
 Then given an additive $\Sp$-operation $\theta:\KSp_0 \rightarrow \E^{n,i}$ as above,
 we define an associated "desuspended" $\Sp$-operation $\omega_H(\theta)$
 on $\E$ of degree $(n-8,i-4)$ by the following commutative diagram:
$$
\xymatrix@R=14pt@C=44pt{
\KSp_0(X)\ar^-{\gamma^{\KSp}_\sigma}_-\sim[r]\ar_{\omega_H(\theta)}[d]
 & \tilde \KSp_0(H \wedge X_+)\ar^\theta[d] \\
\E^{n-8,i-4}(X)\ar^-{\gamma^{\E}_\sigma}_-\sim[r] & \tilde \E^{n,i}(H \wedge X_+).
}
$$
Then a stable $\Sp$-operation is nothing but a sequence $(\Theta_n)_{n \geq 0}$
 of unstable (necessarily additive) $\Sp$-operations such that $\Theta_n=\omega_H(\Theta_{n+1})$.
 Our next result is the computation of the desuspension of every $\Sp$-operations on MW-motivic cohomology:
\begin{thmi}[See Theorem~\ref{thm:desuspension_add_op_HMW}]\label{thm:introC}
Let $R$ be a ring of coefficients and $S$ a scheme such that one of the following conditions
 hold:
\begin{enumerate}
\item[(a)] $R=\ZZ$, $S$ is a scheme over a perfect field $k$ of characteristic $\neq 2,3$.
\item[(b)] $R=\QQ$, $S$ is a $\ZZp$-scheme.
\end{enumerate}
 Then for any integer $n \geq 0$, the following relation holds:
$$
\omega_H\left(\tilde \chi_{2n+4}^R\right)=\psi_{2n+4}.\tilde \chi_{2n}^R
$$
where $\tilde \chi_{2n}^R$ is the additive $\Sp$-operation on $\HMW$ of degree $(8,4)$
 obtained in Theorem~\ref{thm:introB}
 and $\psi_{2n+4}$ is the image in $\HMW^{00}(S)$ of the following quadratic form (seen in $\GW(\ZZ)$):
$$
\psi_{2n+4}=\begin{cases}
\frac 12(2n+4)(2n+3)(2n+2)(2n+1).h & \text{if $n$ is even.} \\
(2n+4)(2n+2).\big((2n^2+4n+1).h-\epsilon\big) & \text{if $n$ is odd.} \\
\end{cases}
$$
\end{thmi}
Note in particular that $\rk(\psi_{2n+4})=(2n+4)(2n+3)(2n+2)(2n+1)$; consequently this result is coherent
 with the one obtained by Riou (in \cite{Riou}).\footnote{Recall once again that hermitian K-theory is $(8,4)$-periodic
 whereas algebraic K-theory is $(2,1)$-periodic.}
The main ingredient of this computation is Theorem~\ref{thm:introA}.

As a corollary, we obtain the computation of all stable $\Sp$-operations on rational MW-motivic cohomology
 and ultimately deduce the announced construction of the \emph{Borel character}:
\begin{thmi}[See Theorem~\ref{thm:stable_symplectic_op}, \ref{thm:Borel_iso}, Par.~\ref{num:Borel_ppties}]
We assume that conditions (a) or (b) of Theorem~\ref{thm:introC} holds,
 and denote by $[-,-]$ maps in $\SH(S)$.
\begin{enumerate}
\item For any integer $n \in \ZZ$, one has canonical isomorphisms:
\begin{align*}
\Hom_{\mathrm{St}}\big(KSp_0,\wCH^{2n}_\QQ\big) 
 \simeq & \big[\KSp_S,\HMW \QQ_S(2n)[4n]\big] \\
 & \simeq \begin{cases}
 \big[\KSp_S,\HM \QQ_S(2n)[4n]\big] \simeq \QQ & n=2i, \\
  \GW(S)_\QQ=\QQ\oplus \W(S)_\QQ & n=2i+1,
\end{cases}
\end{align*}
where the left hand-side denotes the stable symplectic operations
 on rational MW-motivic cohomology (whose $(4n,2n)$-part is given by 
 rational Chow-Witt groups $\wCH^{2n}_\QQ$).
\item Define the \emph{Borel character} as the following map:
\begin{align*}
\bo_t:\KQ^\QQ_{S} \xrightarrow{\ (\bo_{2n})_{n \in \ZZ}\ }
 & \bigoplus_{n \text{ even}} \HMW \QQ_S(2n)[4n] \oplus \bigoplus_{n \text{ odd}} \HM \QQ_S(2n)[4n],
\end{align*}
where $\bo_{2n}$ is the stable operation
$$
\GW_S \simeq \KSp_S(-2)[-4] \rightarrow \HM \QQ_S(2n)[4n]
$$
corresponding to the unit in $\QQ$ (resp. $\GW(S)_\QQ$) under the above isomorphism if $n$ is odd (resp. even).
 Then $\bo$ is an isomorphism of ring spectra and the following diagram commutes:
$$
\xymatrix@C=24pt@R=14pt{
\KQ^\QQ_{S}\ar^-{\bo_t}[r]\ar_f[d] & \bigoplus_{n \text{ even}} \HMW \QQ_S(2n)[4n] \oplus \bigoplus_{n \text{ odd}} \HM \QQ_S(2n)[4n]\ar[d] \\
\KGL^\QQ_{S}\ar^-{\ch_t}[r] & \bigoplus_{m \in \ZZ} \HM \QQ_S(m)[2m].
}
$$
\end{enumerate}
\end{thmi}
The right vertical map is obtained by forgetting the Witt part in degrees 0 modulo 4, while the left-hand vertical map is the forgetful functor.
%
In fact, the Borel character is the sum of the Chern character in even degrees and a "Witt part",
 concentrated in degrees 0 modulo 4. Note moreover that, from the results of \cite{DFJK1}, the Witt part
 is only visible on the characteristic zero part of the scheme $S$.
 This is because it exists, as a morphism of ring spectra, only modulo torsion.
 
To conclude this introduction, let us mention that even though the Borel character is a stable and rational phenomenon,
 the methods and results of this paper apply more generally to unstable and integral situations. Indeed, we note that for a given scheme $X$,
 one needs only to invert finitely many quadratic forms $\psi_{2n+4}$ appearing in Theorem~\ref{thm:introC} in order to get the Borel character.
 Presumably, this yields finer results as simply tensoring with $\QQ$.
 This is somewhat illustrated by the following theorem, which shows that Milnor-Witt $K$-theory embeds into suitable hermitian $K$-theory groups modulo torsion.
\begin{thmi}[see Theorem~\ref{thm:strongsuslin}]\label{thm:introE}
For any $n\geq 2$ and any finitely generated field extension of $k$, the composite
\[
\KMW_n(L) \xrightarrow{\varepsilon_{n,n}} \GW_n^n(L) \xrightarrow{\tilde\chi_{n,n}} \KMW_n(L)
\]
is multiplication by $\psi_{\mu(n)}! \in \GW(k)$, where $\mu(n)$ is the smallest integer of the form $2+4r$ greater than $n$
 and we have put:
$$
\psi_{2+4r}!:=\psi_2 \cdot \psi_6 \cdot \hdots \cdot \psi_{2+4r}.
$$
\end{thmi}
 This result can be seen as a generalization of a theorem of Suslin (\cite{Suslin84}),
 stating that Milnor $K$-theory embeds into (Quillen) $K$-theory modulo torsion.

\subsection*{Linked and further works}

Our result on computing the ternary laws of MW-motivic cohomology owes much to the reading of \cite{LevineEuler},
 as the reader will see in the text.
 The results of Ananyevskiy's thesis, published in \cite{Anan1}, are especially linked to the results obtained here.
 Indeed, Ananyevskiy computes the ternary laws
 (without the abstract theory explained here)
 associated with higher Witt groups in Lemma 8.2 of \emph{op. cit.}
 and build the minus part of our Borel character in Theorem 1.1 of \emph{op. cit.}.

We plan to come back to explicit computations of the Borel character in future work,
 in collaboration with Fangzhou Jin and Adeel Khan.
 Especially, we will study the natural question that arises with the analogy
 between the Chern and Borel characters: finding the quadratic analog of the Grothendieck-Riemann-Roch formula.
 We also plan to compute the ternary laws associated to some well-known cohomology theories, such as higher Grothendieck-Witt groups.

\subsection*{Plan} \label{plan}

In Section~\ref{sec:worientations}, we recall Panin and Walter's theory of generalized orientations, Borel classes
 and introduce Walter's notion of ternary laws.

In Section~\ref{sec:ternary}, we compute the ternary laws associated with MW-motivic cohomology,
 as explained in Theorem~\ref{thm:introA} above.
 The proof reduces to compute the relevant Borel classes
 either in Chow groups or in Witt-cohomology. The last part is the core of the proof and occupies Section~\ref{sec:Witt},
 complemented with Appendix~\ref{sec:appendix} containing an "elementary" computation of some threefold tensor product
 of symplectic bundles which is central in our computations.

Section~\ref{sec:operations} is devoted to the implementation of Riou's method for determining cohomological operations
 in the symplectic case. The first subsection is devoted to prove Theorem~\ref{thm:introB}. 
 The second subsection gives some abstract considerations to determine stable $\Sp$-operations in the general case.
 The core of the section is the third subsection which computes the obstruction to stabilization
 for MW-motivic cohomology, as explained in Theorem~\ref{thm:introC} above.

In Section~\ref{sec:Borel}, we define the Borel character (Definition~\ref{df:Borel_char}) and prove that it is an isomorphism
 of ring spectra. On the model of the proof of Theorem~\ref{thm:introA}, we treat the plus part and minus part separately.
 The plus part can be reduced to the classical case of the Chern character, while the minus part can be treated
 using properties of periodic ring spectra as recalled in Subsection~\ref{sec:periodic} and the ideas of \cite{ALP} 
 suitably extended to arbitrary base $\ZZp$-schemes.

Section~\ref{sec:suslinhomomorphism} contains the proof and statement of Theorem~\ref{thm:introE},
 based on ideas of \cite{Asok14b}.

\section*{Acknowledgments}

The authors are grateful to F. Jin and A. Khan for discussions during our collaboration
 on absolute purity theorems \cite{DFJK1} that were linked with the present work. The second author warmly thanks I. Panin for a very interesting conversation on the analogue of the Chern character linking Grothendieck-Witt groups and Chow-Witt groups. This conversation led to many considerations appearing in this work.
 We express our gratitude to C. Walter for sharing with us his ideas and a preliminary work on ternary laws.
 We also want to thank A. Ananyevskiy, A. Asok, J. Hornbostel, M. Levine and M. Wendt 
 for discussions around the subject of this paper.

F. D\'eglise received support from the French "Investissements d'Avenir" program,
 project ISITE-BFC (contract ANR-lS-IDEX-OOOB).

\section*{Notation and conventions} \label{conventions}

We will fix a base scheme $B$, which in practice will be
 the spectrum of either a perfect field denoted by $k$,
 the ring $\ZZp$ or $\ZZ$.
 We work with the category $\Sch_B$ of quasi-compact and quasi-separated $B$-schemes;
 all schemes are supposed to be in $\Sch_B$. For certain results,
 we will also restrict our attention to regular finite dimensional $B$-schemes,
 and we denote by $\Reg_B$ the corresponding category.
 
Unless explicitly stated, we will consider \emph{(ring) spectra} $\E$ over $B$,
 and look at them as absolute (ring) spectra over $\Sch_B$
 by putting $\E_X=f^*\E$ for any $f:X \rightarrow B$.
 Here are the examples that will appear in the present paper:

\underline{The case $B=\Spec(\ZZ)$}: the absolute ring spectra $\HM \ZZ$
 (resp. $\HM \QQ$, $\KGL$),
 representing integral motivic cohomology (resp. rational motivic cohomology, homotopy invariant K-theory). 
 See \cite{Spit} (resp. \cite{CD3}, \cite{CisKth}). \\
 We will also define the rational Milnor-Witt motivic ring spectrum as 
$$
\HMW \QQ:=\SSp_\QQ,
$$
where $\SSp$ is the motivic sphere spectrum. We refer the reader to \cite[Def. 6.1]{DFJK1}
 for a better definition. It coincides with the above one according to  Cor. 6.2
 of \emph{op. cit.}
 Note also that according to \cite[Cor. 8.9]{DFJK1}, one has for any regular scheme S:
\begin{equation}\label{eq:MWQ&wChow}
\HMW^{2n,n}(S,\QQ)\simeq \wCH^{n}(X)_\QQ=\CH^n(X)_\QQ \oplus H^n_\mathrm{Zar}(S_\QQ,\uW)
\end{equation}
 where $\uW$ is the Zariski sheaf over $S_\QQ$ associated to the Witt functor.

\underline{The case $B=\Spec(\ZZp)$}: the absolute ring spectra $\KQ$ (resp. $\KW$), representing 
 higher Grothendieck-Witt groups \cite[\textsection 9]{Schlicht2} (also called hermitian K-theory)
 (resp. Balmer's derived Witt-groups) over regular schemes. \\
 For the definition of $\KQ$ we refer the reader to \cite{PW1} and
 \cite{ST}.\footnote{Beware that the spectrum $\GW$ is also denoted by $\mathrm{KO}$
 or $\mathrm{KQ}$ in the literature. We follow here the notation of \cite{ST}.}
 To fix our conventions, let us recall that for a regular scheme $S$:
\begin{equation}\label{eq:HGW&KO-KSp}
\KQ^{n,i}(S)=GW_{2i-n}^i(S)=\begin{cases}
KO_{2i-n} & i \cong 0 \mod 4, \\
KSp_{2i-n} & i \cong 2 \mod 4,
\end{cases}
\end{equation}
where $KO_*$ (resp. $KSp_*$) denotes the higher hermitian K-theory of orthogonal bundles
 (resp. symplectic bundles) with the canonical duality (see again \cite{Schlicht2}).
 For the definition of $\KW$, we refer to \cite[Def. 3]{ALP}: $\KW=\KQ[\eta^{-1}]$
 where $\eta$ is the (algebraic) Hopf map.

\underline{The case $B=\Spec(k)$, $k$ perfect field of characteristic not $2$}: the absolute ring spectrum $\HMW \ZZ$
 representing integral Milnor-Witt cohomology as defined in \cite{DF2}.
 We will also consider $\HH \uKMW$ (resp. $\HH \uW$)
 the spectrum associated with the unramified Milnor-Witt K-theory (resp. Witt-theory),
 which represents Chow-Witt groups (resp. unramified Witt-cohomology)
 according to \cite{MorLNM}. In particular, we have for any smooth $k$-scheme $S$:
\begin{equation}\label{eq:MWZ&wChow}
\HMW^{2n,n}(S) \simeq H_\mathrm{Zar}(S,\uKMW) \simeq \wCH^n(S).
\end{equation}
Finally, we will use Morel's plus/minus decomposition of the $\ZZp$-linear stable homotopy category
 (see eg \cite[16.2.1]{CD3} or \cite[Rem. 4]{ALP}). Recall in particular the identifications
 (see \cite[16.2.13]{CD3} for the second one):
\begin{equation}\label{eq:MW&M_plusminus}
\begin{split}
\HMW \QQ_S&=\SSp_{S,\QQ+} \oplus \SSp_{S,\QQ+} \\
\HM \QQ_S & \simeq \SSp_{S,\QQ+}.
\end{split}
\end{equation}

\section{Weak orientations}\label{sec:worientations}

\subsection{Definitions and basic properties}

\begin{num}
Given a vector bundle $V$ over a scheme $X$, we let $\Th(V)=V/V^\times$ be its Thom space in $\SH(X)$.
 Given a spectrum $\E$, and integers $(n,i)\in \ZZ^2$, we put as usual:
$$
\E^{n,i}\big(\Th(V)\big)=\Hom_{\SH(X)}(\Th(V),\E_X(i)[n]).
$$
Recall that the Thom space functor is a monoidal functor with respect to direct sums of vector bundles and
 tensor products of spectra: $\Th(E \oplus F) \simeq \Th(E) \otimes \Th(F)$.
 In particular, tensor products of morphisms induce an exterior product:
$$
\E^{n,i}\big(\Th(E)\big) \otimes \E^{m,j}\big(\Th(F)\big) \rightarrow \E^{m+n,i+j}\big(\Th(E \oplus F)\big), x \otimes y \mapsto x \cdot y.
$$
\end{num}

\begin{num}\label{num:pre_w_or}
We will consider the following $\ZZ$-graded algebraic sub-groups of $\GL_*$:
\begin{equation}\label{eq:alg_gps}
\Sp_* \rightarrow \SL_* \rightarrow \SL^c_* \rightarrow \GL_*.
\end{equation}
The $n$-th graded part of $\Sp_*$ is $\Sp_{2n}$ so that the first map is homogeneous
 of degree $2$. All the other maps are homogeneous of degree $1$.
 They are all classical algebraic groups except $\SL^c_*$ which was introduced in \cite[3.3]{PW1}. 
 Recall that its $n$-th graded part is defined\footnote{We follow the convention of \cite[Rem. 2.8]{Anan19}} as the kernel of the map
$$
\GL_n \times \GG \rightarrow \GG, (g,t) \mapsto t^{-2}\det(g).
$$
Letting $G=G_*$ be one of these groups, there is a classical correspondence between G-torsors over a given scheme $X$
 and vector bundles $E$ over $X$ equipped with extra structures. We summarize the situation in the following table:
\begin{center}
\begin{tabular}{|c|c|}
\hline
group G & bundle $E$ with extra structure \\
\hline
$\SL^c_*$ & $(E,\lambda,L)$, $L$ line bundle, $\lambda:\det E \xrightarrow \sim L^2$ \\
\hline
$\SL_*$ & $(E,\lambda)$, $\lambda:\det E \xrightarrow \sim \AA^1_X$ \\
\hline 
$\Sp_*$ & $(E,\psi)$, $\psi:E \otimes E \rightarrow \AA^1_X$ symplectic form \\
\hline
\end{tabular}
\end{center}
Recall that our symplectic forms are always assumed to be non-degenerate. For short,
 we will say $G$-bundle for a bundle equipped with the corresponding extra structure.
 Morphisms of $G$-bundles over $X$ are morphisms of vector bundles over $X$ which preserve the extra structure in the obvious sense.

One can check that the category $\V(X,G)$ of $G$-bundles over $X$ is additive.
 Let $r=2$ (resp. $r=1$) when $G=\Sp_*$ (resp. in the other cases).
 The constant bundle of rank $r$ admits a canonical $G$-bundle structure and the corresponding object is denoted by $\un^G_X$.
 In particular, given a (ring) spectrum $\E$ over $X$, one gets an isomorphism
$$
\sigma:\E^{2r,r}\big(\Th(\un_X^G)\big) \simeq \E^{0,0}(\un_X).
$$
The following definition is a slight extension of the original one due to Panin and Walter
 (our reference text will be \cite{Anan19}).
\end{num}
\begin{df}\label{df:worientations}
Consider the above notations. 
 Let $\E$ be a ring spectrum over $B$, with unit $1_X$ over a $B$-scheme $X$. 
 An absolute $G$-orientation $\thom$ of $\E$ is the data for
 every $G$-bundle $\cV$ of rank $r$ over a scheme $X$ in $\Sch_B$ of a class $\thom(\cV) \in \E^{2r,r}(\Th(\cV))$,
 satisfying the following properties:
\begin{itemize}
\item \textit{Isomorphisms compatibility}. $\phi^*\thom(\cV)=\thom(\cW)$ for $\phi:\cV \xrightarrow \sim \cW$.
\item \textit{Pullbacks compatibility}. $f^*\thom(\cV)=\thom(f^{-1}\cV)$ for $f:Y \rightarrow X$ in $\Sch_B$.
\item \textit{Products compatibility}. $\thom(\cV \oplus \cW)=\thom(\cV) \cdot \thom(\cW)$.
\item \textit{Normalization}. $\thom(\un_X^G)$ corresponds to $1_X$ via the isomorphism $\sigma$.
\end{itemize}
In this situation, we also say that $(\E,\thom)$ is absolutely \emph{$G$-oriented}.
\end{df}
This definition is tailored to generalize the classical notion of orientation in motivic homotopy theory,
 which corresponds in the above term to a $\GL$-orientation.

\begin{rem}
When $\Sch_B$ is the category of smooth $B$-schemes of finite type,
 we will simply say that $\E$ is $G$-oriented, and $\thom$ is a $G$-orientation.
 This is the original definition of \cite{PW1} and \cite{Anan19}. \\
 When $G=GL_*$ (resp. $G=\Sp_*$), a $G$-orientation uniquely determines an absolute $G$-orientation.
 Indeed, according to \cite{Deg12} (resp. Panin-Walter's symplectic projective bundle formula, see recall in \ref{thm:sp_pb_thm}),
 a $G$-orientation over a scheme $X$ is determined by a class $c \in \tilde \E^{2,1}(\PP^\infty_B)$
 (resp. $b \in \tilde \E^{4,2}(\HP^\infty_B)$) whose restriction to $\PP^1_B$
 (resp. $\HP^1_B$) is the suspension of $1_X$.
 Thus a $G$-orientation of $\E$ over $B$ induces by pullback a $G$-orientation over any $B$-scheme $X$,
 and therefore an absolute $G$-orientation. \\
On the other hand, we do not know an analogue statement for $\SL$ or $\SL^c$ orientations.
\end{rem}

\begin{rem}\label{rem:link_orientations}
As proved in \cite[Lem. 1.4]{Anan19}, one deduces from the above definition
 that given a map $\phi:G \rightarrow H$ from Diagram \eqref{eq:alg_gps},
 an $H$-orientation $\thom$ on a ring spectrum $\E$ induces a $G$-orientation
 on $\E$, namely $\thom \circ \phi_*$, where $\phi_*:\V(X,G) \rightarrow \V(X,H)$
 is the additive functor induced by $\phi$. We state the following relations between orientations for further use:
$$
\text{orientation}
 \Rightarrow \text{$\SL^c$-orientation}
 \Rightarrow \text{$\SL$-orientation}
 \Rightarrow \text{$\Sp$-orientation}.
$$
\end{rem}

\begin{ex}\label{ex:orientations}
Here are examples among the spectra that will appear in this work.
\begin{itemize}
\item The absolute ring spectra $\HM R$ and $\KGL$ are oriented (this is classical, see e. g. \cite{Deg12}).
\item The absolute ring spectra $\KQ$ and $\KW$ are $\SL^c$-oriented. The case of hermitian K-theory
 is proved in \cite[Th. 5.1]{PW1}. The case of derived Witt groups follows from that of hermitian
 K-theory, given Definition \ref{df:KW} which gives a morphism of absolute ring spectra $\KQ\rightarrow \KW$.
\end{itemize}
\end{ex}

\begin{df}\label{df:Euler}
Let $(\E,\thom)$ be a $G$-oriented ring spectrum as in the previous definition.

Then for any scheme $X$, and any $G$-bundle $\cV$ with underlying vector bundle $V$ of rank $r$,
 we define the associated Euler class by the formula:
$$
e^\thom(\cV)=s^*\pi^*\thom(\cV) \in \E^{2r,r}(X)
$$
where $s:X \rightarrow V$ (resp. $\pi:V \rightarrow \Th(V)$) is the zero section (resp. canonical projection map).
\end{df}
When the orientation $\thom$ is clear from the context, we simply write $e(\cV)$.
 We will apply the same convention for all the other characteristic classes associated with $\Sp$-orientations. This is harmless in this work as we will never consider two different such orientations
 on our ring spectra.

One immediately deduces from the properties of Thom classes the following (usual) properties of Euler classes.
\begin{prop}
Consider the assumptions of the previous definition. Then Euler classes satisfy the following formulas:
\begin{itemize}
\item \textit{Invariance under isomorphisms}. $\phi^*e(\cV)=e(\cW)$ for $\phi:\cV \xrightarrow \sim \cW$.
\item \textit{Pullbacks compatibility}. $f^*e(\cV)=e(f^{-1}\cV)$ for $f:Y \rightarrow X$ in $\Sch_B$.
\item \textit{Products compatibility}. $e(\cV \oplus \cW)=e(\cV) \cdot e(\cW)$.
\item \textit{Vanishing}.  $e(\cV)=0$ whenever $\cV$ contains a trivial $G$-bundle as a direct factor.
\end{itemize}
\end{prop}


%
%

\subsection{Borel classes}

\begin{num}\label{num:basic_sp_bdl}\textit{Symplectic bundles}.
Let us be more specific about symplectic vector bundles,
 introduced in Paragraph \ref{num:pre_w_or}.

First recall that any vector bundle $V/X$ admits a {\it symplectification}:
$$\cH(V):=\left(V \oplus V^\vee,\begin{pmatrix} 0 & 1 \\ -\mathrm{can} & 0 \end{pmatrix}\right),
$$
where $\mathrm{can}:V\to V^{\vee\vee}$ is the usual canonical isomorphism.
Using the notation of \emph{loc. cit.}, we get in particular:
 $\un^\Sp=\cH(\AA^1)$, that we will simply denote by $\cH$ in the sequel. To comply with the classical notations,
 the direct sum of symplectic vector bundles will be denoted by $\perp$.
 In particular, $\cH(\AA^n)=\cH^{\perp,n}$.
 More generally, $\cH$ sends $\oplus$ to $\perp$.
\end{num}

\begin{num}\label{num:sp_pb_thm}
As explained by Panin and Walter,
 $\Sp$-oriented ring spectra are analogous to ($\GL$-)oriented ones.
 Let us first recall the $\Sp$-projective bundle theorem.
 Consider a $\Sp$-bundle $(V,\phi)$ over a scheme $X$.
 Panin and Walter in \cite{PW2} introduced the projective $\Sp$-bundle $\HP(V,\phi)$
 as the open subscheme of the Grassmannian scheme $\mathrm{Gr}(2,V)$ on which the restriction of $\phi$ to the canonical sub-bundle of rank $2$ is non-degenerate.

We let $U$ be the tautological rank $2$ bundle on $\HP(V,\psi)$. By definition, it is equipped with a
 symplectic structure $\psi$ coming from the restriction of $\phi$ and we set $\mathfrak U=(U,\psi)$.

The following $\Sp$-projective bundle theorem is due to Panin and Walter; see \cite[Th. 8.2]{PW2} for a proof.
\end{num}
\begin{thm}\label{thm:sp_pb_thm}
Consider the above notations and assume that $V$ has rank $2n$.
 Let $p:\HP(V,\psi) \rightarrow X$ be the canonical projection, $\E$ be an an $\Sp$-oriented ring spectrum
 and $b=e(\cU)$ be the associated Euler class (Def. \ref{df:Euler}).

Then the following map is an isomorphism of bi-graded $\E^{**}(X)$-modules:
$$
\bigoplus_{i=0}^n \E^{**}(X) \rightarrow \E^{**}\big(\HP(V,\psi)\big),
 x_i \mapsto p^*(x_i).b^i.
$$
\end{thm}

\begin{num}\textit{Borel classes}.\label{num:Borel_classes}
 It is easy to derive from the previous theorem the theory of
 Borel classes. Under the above notation,
 they are the classes $b_i(V,\psi) \in \E^{4i,2i}(X)$ for $i \geq 0$ uniquely determined
 by the following relations:
\begin{equation}\label{eq:borel_pb_formula}
\sum_{i=0}^n (-1)^i.b_i(V,\psi).b^{n-i}=0, b_0(V,\psi)=1, \forall i>n, b_i(V,\psi)=0.
\end{equation}
In addition, they satisfy the following relations:
\begin{enumerate}
\item $\rk(V)=2n$, $b_n(V,\psi)=e(V,\psi)$.
\item \textit{Invariance under isomorphisms}. $b_i(\cV)=b_i(\cW)$ for $\phi:\cV \xrightarrow \sim \cW$.
\item \textit{Pullbacks compatibility}. $f^*b_i(\cV)=b_i(f^{-1}\cV)$ for $f:Y \rightarrow X$ in $\Sch_B$.
\item \textit{Trivial bundles}. $b_i\big[\cH(\AA^n)\big]=0$ for $i>0$, $n>0$.
\item \textit{Whitney sum formula}. $b_t(\cV \perp \cW)=b_t(\cV) \cdot b_t(\cW)$ in $\E^{**}(X)[t]$,
 where $b_t$ denotes the {\em total Borel class}:
\begin{equation}\label{eq:Borel_total}
b_t(\cV)=\sum_i b_i(\cV).t^i.
\end{equation}
\end{enumerate}
One can reformulate the two last properties by saying that the total Borel class
 $b_t$ factors through the $0$-th symplectic $K$-theory group, $\KSp_0(X)$,
 and actually induces a morphism of abelian groups, sending sums to products:
\begin{equation}\label{eq:total_Borel}
b_t:\KSp_0(X)/\ZZ\big[\cH\big] \rightarrow \E^{**}(X)[t]^\times.
\end{equation}
\end{num}

\begin{num}\label{num:sp-splitting-principle}
\textit{The symplectic splitting principle}. As for the classical Chern classes (see eg. \cite[Section 3.2]{Ful}),
 one derives from the previous theorem a splitting principle.
 Given any symplectic bundle $\cV$ over a scheme $X$, there exists an affine morphism $p:X' \rightarrow X$
 inducing a monomorphism $p^*:\E^{**}(X) \rightarrow \E^{**}(X')$ with the property that $p^{-1}(\cV)$ splits
 as a direct sum of rank $2$ symplectic bundles: $p^{-1}(\cV)=\mathcal X_1 \perp \hdots \perp \mathcal X_n$.

One defines the {\it Borel roots} of $\cV$ as the Borel classes $\xi_i=b_1(\mathcal X_i)$ so that
 by the preceding Whitney sum formula, the Borel classes of $\cV$ are the elementary symmetric polynomials
 in the variables $\xi_i$.

As in the classical case, one can compute universal formulas involving Borel classes of $\cV$ by introducing Borel roots
 $\xi_i$, which reduces to rank $2$ symplectic bundles, compute as if $\cV$ was completely split, and
 then express the resulting formula in terms of
 the elementary symmetric polynomials in the $\xi_i$. This principle will be used repeatedly in Section \ref{sec:ternary}.
\end{num}

A particular instance of Borel classes will be useful in the sequel (see Section \ref{sec:Witt}).
\begin{df}\label{df:pontryagin}
 Consider the notations of the previous paragraph.
 Given an arbitrary vector bundles $V$ over a scheme $X$, and an integer $i\geq 0$,
 one defines its $i$-th {\em Pontryagin class} associated with the $\Sp$-oriented ring
 spectrum $\E$ as:
$$
p_i(V)=b_i\big(\cH(V)\big).
$$
\end{df}
Beware that we do not follow here the conventions of \cite[Def. 7]{Anan2}
 for which one uses a different numbering and sign: $p_i(V)=(-1)^i.b_{2i}\big(\cH(V)\big)$.

\begin{num}\label{num:action_units}
For the next formula, we need some notation. Let $u \in \mathcal O_X(X)^\times$ be a global unit on a scheme $X$.
 Let us consider the isomorphism $\gamma_u:\AA^1_X \rightarrow \AA^1_X$ obtained by multiplication by $u$.
 It induces a morphism of Thom spaces: $\gamma_u:\Th(\AA^1_X) \rightarrow \Th(\AA^1_X)$.
 Following Morel (eg. \cite[6.1.3]{MorelSphere}),
 we denote by $\langle u \rangle$ the corresponding automorphism of $\un_X$ in the stable homotopy category over $X$.
 These elements satisfy the following formulas in the group $\End(\un_X)$:
\begin{itemize}
\item[(U1)] $\langle u \rangle \cdot \langle v \rangle=\langle uv \rangle$
\item[(U2)] $\forall i, \langle u^{2i} \rangle=1$.
\end{itemize}
The following proposition is mainly due to \cite{Anan19}. At present, we don't know if it is true for a general $\Sp$-oriented spectrum.
\end{num}
\begin{prop}\label{prop:units&Borel}
Let $(\E,\thom)$ be an $\SL$-oriented ring spectrum,
 $u \in \mathcal O_X(X)^\times$ be a unit, and  $i \geq 0$ an integer.
 We write $\thom$ for the induced $\Sp$-orientation on $\E$ (Remark \ref{rem:link_orientations}).
\begin{enumerate}
\item For any $\Sp$-bundle $(V,\psi)$ of rank $2$: $\thom(V,u.\psi)
 =\langle u \rangle.\thom(V,\psi)$.
\item For any $\Sp$-vector bundle $(V,\psi)$ over $X$: $b_i(V,u\psi)=\langle u^i\rangle.b_i(V,\psi)$.
\end{enumerate}
Here, $u\psi$ is simply the composition $\gamma_u \circ \psi$.
\end{prop}
\begin{proof}
Let then $(V,\psi)$ be symplectic bundle.
 The symplectic form $\psi$ corresponds to a trivialization of the determinant $\psi:\det V\simeq \AA^1_X$,
 while the symplectic form $u\psi$ corresponds to the composite 
\[
\det V\stackrel \psi\to \AA^1_X\stackrel {\cdot u}\to \AA^1_X.
\]
The claim now follows from \cite[Lemma 7.3]{Anan19}.
 Let us now consider point (2). We denote by $(U,\psi)$ the tautological rank $2$ bundle over $\HP(V,\psi)$
 (Paragraph \ref{num:sp_pb_thm}).
 A straightforward computation shows that the tautological rank $2$ bundle on $\HP(V,u\psi)$
 is precisely $(U,u\psi)$ so that  $e(U,u\psi)=\langle u\rangle e(U,\psi)$ by (1).
 Applying the projective bundle formula \eqref{eq:borel_pb_formula} for $(V,\psi)$ and $(V,u.\psi)$ respectively,
 with $b=e(U,\psi)$, we get:
\begin{align*}
b^n&=b_1(E,\psi)b^{n-1}-\ldots-(-1)^nb_n(E,\psi), \\
\langle u^n\rangle b^n&=\langle u^{n-1}\rangle b_1(E,u\psi)b^{n-1}-\ldots-(-1)^nb_n(E,u\psi).
\end{align*}
Now we can multiply the first equation by $\langle u^n\rangle$ and identify each
 terms with the ones of the second equation. Taking into account formulas (U1) and (U2), one easily concludes.
\end{proof}

\begin{rem}\label{rem:notations_GW}
Extending the notations of Paragraph \ref{num:action_units},
 it will be convenient to introduce the following notation\footnote{They coincide with the classical notations
 for the Grothendieck-Witt ring through the isomorphism of Morel $\Aut(\un_k)=\GW(k)$ when $X=\Spec(k)$
 is the spectrum of a field}:
\begin{itemize}
\item $\langle u_1,...,u_n \rangle=\langle u_1\rangle + \hdots + \langle u_n \rangle$.
\item $\epsilon=-\langle -1 \rangle$.
\item $h=\langle 1,-1 \rangle$.
\end{itemize}
Extending a classical terminology from the theory of quadratic forms,
 we will say that a cohomology class $x \in \E^{n,i}(X)$ is \emph{hyperbolic}
 if it is of the form:
$$
x=h.x'
$$
using the $\End(\un_X)$-module structure on $\E^{**}(X)$.
\end{rem}

%
%

\subsection{Walter's ternary laws}

\begin{num}\label{num:df:ternary}
It appears clearly from the previous section that Borel classes
 are to $\Sp$-oriented spectra what Chern classes are to oriented spectra.
 It is therefore natural to look for an analogue of the the theory of formal
 group law associated to any oriented ring spectrum.

As observed by Walter\footnote{We refer here to unpublished notes of Walter.
 Walter does his computations for smooth $k$-schemes but as shown below,
 one can work with arbitrary schemes.}, the first technical problem that arises is the fact
 that symplectic bundles are not stable under tensor product. However,
 if you consider an odd number of symplectic bundles, then their tensor product
 is equipped with a canonical symplectic form -- the tensor product of the 
 symplectic forms of each bundle.
The second technical problem is that a triple product of symplectic bundles of rank $2$
 is of rank $8$. This means we will have to take into account four non trivial Borel classes
 $b_i$ for $i=1,\hdots, 4$. Once taking into account these differences, 
 we can mimic the construction of the associated formal group law
 in classical orientation theory as follows.

Let us fix an $\Sp$-oriented ring spectra $(\E,\thom)$ over an
 arbitrary scheme $X$ (Def. \ref{df:worientations}).
 Put $\E^{**}=\E^{**}(X)$.

 We consider the following triple product as an ind-smooth $X$-scheme:
 $P=\HP^\infty_X \times_X \HP^\infty_X \times_X \HP^\infty_X$.
 For $i=1, 2, 3$, let $\cU_i$ be the respective canonical rank $2$ symplectic
 bundle on the $i$-th coordinate, and let $x,y,z=b_1(\cU_1), b_1(\cU_2), b_1(\cU_3)$.
 According to the symplectic projective bundle theorem \ref{thm:sp_pb_thm},
 one gets the following isomorphism of $\E^{**}$-bigraded rings:
$$
\E^{**}(P) \simeq \E^{**}[[x,y,z]],
$$
the ring of power series in three variables. The next definition follows
 considerations initiated by Walter.
\end{num}
\begin{df}\label{df:ternary}
Consider an $\Sp$-oriented spectrum $\E$.
 We respectively associate to $\E$ the \emph{$l$-th ternary laws} for $l \in \{1,2,3,4\}$
 and the total ternary law:
\begin{align*}
F_l(x,y,z)&=b_l(\cU_1 \otimes \cU_2 \otimes \cU_3) \in \E^{**}[[x,y,z]], \\
F_t(x,y,z)&=b_t(\cU_1 \otimes \cU_2 \otimes \cU_3) \in \E^{**}[[x,y,z]][t].
\end{align*}
We will generically denote the $l$-th ternary law as:
$$
F_l(x,y,z)=\sum_{i,j,k} a_{ijk}^l.x^iy^jz^k.
$$
\end{df}
According to this definition, the bidegree of $a_{ijk}^l$, as an element
 of the ring of coefficients $\E^{**}$ is given by:
\begin{equation}\label{eq:aijkl_deg}
\deg(a_{ijk}^l)=(l-i-j-k).(4,2).
\end{equation}
As in the case of formal group laws associated with oriented ring spectra,
 the ternary laws play a universal role:
 given any rank $2$ symplectic bundles $\cV_1, \cV_2, \cV_3$ over a scheme $X$,
 one always gets the computation:
$$
b_l(\cV_1 \otimes \cV_2 \otimes \cV_3)=F_l(b_1(\cV_1),b_1(\cV_2),b_1(\cV_3)),
$$
the substitution being legitimate as the Borel classes $b_1(\cV_i)$ are nilpotent.

\begin{num}\label{num:symmetry}
It is legitimate to look for an analogue of the axioms satisfied by formal group laws
 in the case of ternary laws.
 Commutativity is obvious, as the tensor product of three symplectic bundles is commutative.
 Explicitly, the coefficient $a^l_{ijk}$ above is independent of the order of the indices $i,j,k$:
\begin{equation}
a_{ijk}^l=a_{jik}^l=a_{ikj}^l.
\end{equation}
In other words, the power series $F_l(x,y,z)$ are symmetric in the variables $x,y,z$.
 To give examples of ternary laws, it is useful to consider a basis for the symmetric polynomials
 in $x,y,z$. We choose the monomial basis denoted by:
\begin{equation}\label{eq:symmetric_mon}
\sigma(x^iy^jz^k)=\sum_{(a,b,c)} x^ay^bz^c
\end{equation}
where the sum runs over the monomials $x^ay^bz^c$ in the orbit
 of $x^iy^jz^k$ under the action of the permutations of the variables $x,y,z$. 
So taking into account the commutativity constraints, ternary laws can be written
$$
F_l(x,y,z)=\sum_{i \geq j \geq k} a_{ijk}^l.\sigma(x^iy^jz^k).
$$
\end{num}

The analogue of the relation $F(x,0)=x$
 is already more involved as shown by the following formula due to Walter.
\begin{prop}\label{prop:ternary_monomials}
Let $\E$ be an $\SL$-oriented ring spectrum over a scheme $X$.
 We apply the previous definition to the induced $\Sp$-orientation on $\E$
 (Remark \ref{rem:link_orientations}). Then one gets the following
 computations:
\begin{enumerate}
\item Using the notation of \ref{rem:notations_GW}
 and the $\Aut(\un_X)$-module structure on $\E^{**}(X)$:
$$
F_t(x,0,0)=1+2h.xt+2(h-\epsilon).x^2t^2+2h.x^3t^3+x^4t^4.
$$
In terms of coefficients:
\begin{align*}
&i \neq l \Rightarrow a^l_{i00}=0, \\
&a^1_{100}=a^3_{300}=2h, a^2_{200}=2(h-\epsilon), a^4_{400}=1.
\end{align*}
\item $F_4(x,x,0)=0$.

In terms of coefficients: $\sum_{i=0}^r a^4_{i,r-i,0}=0$.
\end{enumerate}
\end{prop}
\begin{proof}
Consider the first assertion.
We write $\cU=(U,\psi)$ for $\cU_1$.
Given that $b_i(\cH)=0$ for $i>0$ (Paragraph \ref{num:Borel_classes}),
 we get:
$$
F_l(x,0,0)=b_l\big((U_,\psi) \otimes \cH \otimes \cH\big).
$$
We easily obtain an isomorphism of symplectic bundles:
$$
\cU \otimes \cH \otimes \cH \simeq \cU \otimes \cH(\AA^2)
 \simeq (U,\psi)^{\oplus,2} \oplus (U,-\psi)^{\oplus,2}.
$$
According to Proposition \ref{prop:units&Borel}, one gets:
$$
b_t(U,-\psi)=1-(\epsilon x).t.
$$
Therefore, the Whitney sum formula (again Paragraph \ref{num:Borel_classes}) gives:
$$
b_t\big((U_,\psi) \otimes \cH \otimes \cH\big)=(1+x.t)^2.\big(1-(\epsilon x).t\big)^2
$$
so that expanding the last term gives the desired result.

To get the second point, we have to compute $b_4(\cU \otimes \cU \otimes \cH)$.
 We claim that the bundle $\cU \otimes \cU \otimes \cH$ has a nowhere vanishing section.
 Indeed, consider the short exact sequence
\[
0\to \det(\cU)\to \cU \otimes \cU\to \mathrm{Sym}^2(\cU)\to 0.
\]
Since $\cU$ is symplectic, we have an isomorphism $\det(\cU)\simeq\mathcal{O}$
 and it follows that $\cU \otimes \cU$ has a nowhere vanishing section.
 So does $\cH$ and finally we see that $\cU \otimes \cU \otimes \cH$ has a nowhere vanishing section.
 Consequently, its Euler class vanishes, hence the result (according to 
 point (1) in Par. \ref{num:Borel_classes}).
\end{proof}

\begin{rem}
There are also formulas encoding the associativity of the ternary tensor product
 of symplectic bundles. Such a formula can be expressed by considering five rank $2$
 symplectic bundles. We work over $P=(\HP^\infty)^5$ and write
 $\cU_i$ for the pullback of universal rank $2$ vector $\Sp$-bundle over the $i$-th copy
 of $P$. The formula then amounts to the equality of total Borel classes:
$$
b_t((\cU_1 \otimes \cU_2 \otimes \cU_3) \otimes \cU_4 \otimes \cU_5)
 =b_t(\cU_1 \otimes (\cU_2 \otimes \cU_3 \otimes \cU_4) \otimes \cU_5).
$$
Each of these Borel classes can be computed in terms of the total ternary law $F_t(x,y,z)$
 using the symplectic splitting principle (Remark \ref{num:sp-splitting-principle})
 and introducing three Borel roots for each of the rank $8$ $\Sp$-bundles
 $\cU_1 \otimes \cU_2 \otimes \cU_3$ and $\cU_2 \otimes \cU_3 \otimes \cU_4$.
 We will not use these formulas in the sequel, but we will come back to this in further work.
\end{rem}

\begin{num}\textit{The oriented case}. Suppose that $\E$ is $\GL$-oriented,
 and consider the induced $\Sp$-orientation (Rem. \ref{rem:link_orientations}).
 In this case, one can express the total ternary law $F_t(x,y,z)$ in terms
 of the formal group law associated with the $\GL$-orientation.
 Indeed, the Borel classes can be expressed in terms of Chern classes:
\begin{equation}\label{eq:Borel&Chern}
b_i(V,\psi)=(-1)^i.c_{2i}(V).
\end{equation}
This is easily seen for a symplectic bundle of rank $2$, using the fact that its first Chern class is trivial.
 The result for symplectic bundles of higher rank follows from the splitting principle.
 So, using the notations of the above definition, one can use the following equality:
\begin{equation}\label{eq:oriented_ternary}
c_t(U_1 \otimes U_2 \otimes U_3)=\sum_{i=2l} (-1)^l.F_l(x,y,z).t^i
\end{equation}
where $\cU_i=(U_i,\psi_i)$, $x=b_1(\cU_i)=-c_2(U_i)$. The identity \eqref{eq:oriented_ternary} becomes:
\begin{align*}
&\sum_{i=2l} (-1)^l.F_l(x,y,z).t^i=\prod_{e_1,e_2,e_3 \in \{\pm 1\}} F(F(e_1u_1,e_2u_2),e_3u_3), \\
&\quad x=u_1^2, y=u_2^2, z=u_3^2.
\end{align*}
We are now reduced to a formal computation. Let us state now the easy case of an additive
 formal group law. This will be used in the sequel and gives our first
 example.
\end{num}
\begin{prop}\label{prop:oriented_ternary}
Consider the notations above and assume $F(x,y)=x+y$.
 Then, using notations \eqref{eq:symmetric_mon},
 the ternary laws associated with the $\Sp$-oriented ring spectrum $\E$ are:
$$
F_l(x,y,z)=\begin{cases}
4.\sigma(x), & l=1, \\
6.\sigma(x^2)+4.\sigma(xy), & l=2, \\
4.\sigma(x^3)-4.\sigma(x^2y)+40.xyz, & l=3, \\
\sigma(x^4)-4.\sigma(x^3y)+6.\sigma(x^2y^2)+4.\sigma(x^2yz), & l=4.
\end{cases}
$$
\end{prop}

%

\section{Ternary laws associated with Chow-Witt groups} \label{sec:ternary}


\subsection{General principles}

\begin{num}
The aim of this section is to compute the ternary laws (Def. \ref{df:ternary})
 associated with Chow-Witt groups, or equivalently Milnor-Witt motivic cohomology.
 So, in the entire section we will work with the ring of coefficients $A=\E^{**}$ of
 some $\SL$-oriented ring spectrum $\E$. The ternary laws live in the ring
 of formal power series $A[[x,y,z]]$.

One can already obtain the following simple lemma which follows from
 the formula for the degree of the coefficients of the ternary laws
 \eqref{eq:aijkl_deg} and Proposition \ref{prop:ternary_monomials}.
\end{num}
\begin{lm}\label{lm:vanish_coef_ternary}
Let $\E$ be an $\SL$-oriented theory over a scheme $X$,
 with coefficients ring $\E^{**}=\E^{**}(X)$. We assume
 $\E^{n,i}=0$ in degree $m.(4,2)$ for $m\neq0$.

Then the ternary laws associated with $\E$ have the following form:
$$
F_l(x,y,z)=
\begin{cases}
2h.\sigma(x), & l=1, \\
2(h-\epsilon).\sigma(x^2)+a_{110}^2.\sigma(xy), & l=2, \\
2h.\sigma(x^3)+a^3_{210}.\sigma(x^2y)+a^3_{111}.xyz, & l=3, \\
\sigma(x^4)+a^4_{310}.\sigma(x^3y)+a^4_{220}.\sigma(x^2y^2)+a^4_{211}.\sigma(x^2yz), & l=4.
\end{cases}
$$
using convention \eqref{eq:symmetric_mon} for $\sigma$.
\end{lm}

\subsection{The case of Witt groups}\label{sec:Witt}

\begin{num}\label{num:initial_form_ternaryW}
We will now determine the ternary laws $F_l^W$ in the case of Witt unramified cohomology, 
 represented by $\HH \HW$.
 The underlying category of schemes $\Sch$ is that of $k$-schemes for a fixed base
 field $k$ of characteristic different from $2$ (see also Notations page \pageref{conventions}).

We can apply Lemma \ref{lm:vanish_coef_ternary} to this ring spectrum.
 Recall moreover that on $\HH \HW$, one gets $\epsilon=1$ and $h=0$. Let us restate
 the lemma in our particular case toi clarify the situation:
$$
F^W_l(x,y,z)=
\begin{cases}
0, & l=1, \\
-2.\sigma(x^2)+a_{110}^2.\sigma(xy), & l=2, \\
a^3_{210}.\sigma(x^2y)+a^3_{111}.xyz, & l=3, \\
\sigma(x^4)+a^4_{310}.\sigma(x^3y)+a^4_{220}.\sigma(x^2y^2)+a^4_{211}.\sigma(x^2yz), & l=4.
\end{cases}
$$
\end{num}

Our first step amounts consists in computing $F_l^W(x,y,0)$.
We use in particular the following proposition, which is a combination of results of Levine and Ananyevskiy.
\begin{prop}
Assume $\mathrm{char}(k) \neq 2$.
 Let $P=(\HP^\infty)^3$ and
 $\cU_i=(U_i,\psi_i)$ be the tautological rank $2$ symplectic bundle on the $i$-th coordinate of $P$
 (as in \ref{num:df:ternary}). Put $x=b_1(\cU_1)$, $y=b_1(\cU_2)$. 
 One can compute the following Pontryagin classes (Def. \ref{df:pontryagin})
 associated with $\HH \HW$:
$$
p_l^W(U_1 \otimes U_2)=\begin{cases}
-2.(x^2+y^2), & l=2, \\
0, & l=3, \\
(x^4+y^4)-2.x^2y^2, & l=4.
\end{cases}
$$
\end{prop}
For the computation of the odd classes, see \cite[7.9]{Anan19};
 for the even ones, see \cite[Prop. 9.1]{LevineEuler}.
 According to our definition, we get 
$$p_l^W(\cU_1 \otimes \cU_2)=F_l(x,y,0).$$
So the above proposition already gives us the following relations (corresponding to $l=2,3,4$):
\begin{align}
\label{eq:coef_ternary_W1}&a^2_{110}=0, \\
\label{eq:coef_ternary_W2}&a^3_{210}=0,  \\
\label{eq:coef_ternary_W3}& a^4_{310}=0, \ a^4_{220}=-2.
\end{align}

To get the remaining coefficients,
 we use the symmetric product $\Sym^*$ of symplectic vector bundles.
 In fact, the next proposition allows to determine $F_l^W(x,x,x)$
 and this will allow us to conclude.
\begin{prop}\label{prop:sym_power_ternaryW}
Assume that $6 \in k^\times$. \\
Let $\cU=(U,\psi)$ be the tautological rank $2$ symplectic bundle on $\HP^\infty$
 and $x$ be its first Borel class.
 Then the following computations hold in $H^*(\HP^\infty,\HW)$:
\begin{enumerate}
\item $b_i^W(\Sym^3 \cU)=\begin{cases}
(-3+\langle 3\rangle).x, & i=1,\\
(-4+\langle 3\rangle).x^2, & i=2, \\
0, & i>2.
\end{cases}$
\item $b_i^W(\cU^{\otimes,3})=\begin{cases}
0, & i=1,\\
-6.x^2, & i=2, \\
-8.x^3, & i=3, \\
-3.x^4, & i=4.
\end{cases}$
\end{enumerate}
\end{prop}
\begin{proof}
Our main tool will be the following computation of symplectic bundles \eqref{eqn:decomposition}:\footnote{This is
 the very point where the assumption on $k$ is needed.}
\begin{equation}\label{eq:sym_power_ternaryW1}
(U^{\otimes 3},\varphi^{\otimes 3})\simeq (U,\langle 2\rangle\varphi)\perp (U,\langle 6\rangle\varphi) \perp (\mathrm{Sym}^3U,\psi).
\end{equation}
Note that applying point (2) of Proposition \ref{prop:units&Borel},
 we get: 
$$
b^W_1(U,\langle 2\rangle\varphi)=\langle 2\rangle.x,
 \ b^W_1(U,\langle 6\rangle\varphi)=\langle 6\rangle.x.
$$
We start with the proof of (1).
We first do the computation of the left-hand column.
 The symplectic bundle $\Sym^3\cU$ is of rank $4$,
 so that we already get the required vanishing (by definition, see \eqref{eq:borel_pb_formula}).
 We apply the symplectic splitting principle
 to that bundle (Paragraph \ref{num:sp-splitting-principle}): in particular, after pullback along an affine morphism
 $p:X' \rightarrow \HP^\infty$, $\Sym^3\cU$ admits a splitting as two rank $2$ symplectic bundles,
 whose Euler classes we denote respectively $\alpha$ and $\beta$.
 Pulling back the decomposition \eqref{eq:sym_power_ternaryW1} to $X'$ yields a decomposition of $\cU^{\otimes,3}$ into a sum
 of four rank $2$ symplectic bundles and applying the Whitney sum formula of Par. \ref{num:Borel_classes},
 we get:
\begin{equation}\label{eq:sym_power_ternaryW2}
b^W_t(U^{\otimes 3},\varphi^{\otimes 3})=(1+\langle 2\rangle xt)(1+\langle 6\rangle xt)(1+\alpha t)(1+\beta t).
\end{equation}
As we already know $F_1^W$ and $F_2^W$ (Paragraph \ref{num:initial_form_ternaryW} and relation \eqref{eq:coef_ternary_W1}),
 we obtain the following equation by computing the coefficients of $t$ and $t^2$:
\begin{align*}
\alpha+\beta+(\langle 2\rangle+\langle 6\rangle).x & = 0 \\
\alpha\beta+(\alpha+\beta)(\langle 2\rangle + \langle 6\rangle).x+\langle 3\rangle.x^2 &=-6.x^2.
\end{align*}
Using the relation satisfied by the Borel roots $\alpha$ and $\beta$,
 we deduce that:
$$
b^W_1(\Sym^3 \cU)=(\langle -2\rangle+\langle -6\rangle).x,
 \ b_2^W(\Sym^3 \cU)=(-4+\langle 3\rangle).x^2.
$$
To conclude the proof of (1), it therefore suffices to show the following equality in $W(k)$:
\[
\langle -2\rangle+\langle -6\rangle  =  -3+ \langle 3\rangle.
\]
We need only to prove it either for a finite field (if $k$ is of positive characteristic), either for $\QQ$ in case $k$ is of characteristic zero.
 The first case is obvious as both forms have the same rank and same discriminant, the second case is obtained via a comparison of residues.

To get (2), it suffices now to finish the computation
 of \eqref{eq:sym_power_ternaryW2}. But using the computation above, we obtain:
$$
b^W_t(U^{\otimes 3},\varphi^{\otimes 3})=(1+\langle 2\rangle xt)(1+\langle 6\rangle xt)(1+(-3+\langle 3\rangle) xt+(-4+\langle 3\rangle)x^2t^2).
$$
Then, an easy computation allows us to conclude.
\end{proof}

\begin{num}
The case $l=3$ and $l=4$ of point (2) in the preceding computation together with relations \eqref{eq:coef_ternary_W2} and \eqref{eq:coef_ternary_W3}
 yield:
\begin{align}
& a^3_{111}=-8, \\
& 3-6+a^4_{211}=-3 \Rightarrow a^4_{211}=0.
\end{align}
Let us write the final result for the total ternary law associated with $\HH\HW$
\begin{equation}\label{eq:ternary_W}
F_t^W(x,y,z)=-2\sigma(x^2).t^2-8.xyz.t^3+\lbrack\sigma(x^4)-2\sigma(x^2y^2)\rbrack.t^4.
\end{equation}
\end{num}

\subsection{Final case}

We will now assemble our knowledge of ternary laws in Chow groups
 (Proposition \ref{prop:oriented_ternary}), represented over $k$ by
 the unramified Milnor K-theory $\HH\KM$,
 and Witt-cohomology from the preceding section.
 Indeed, recall there is canonical map of sheaves:
$$
\varphi:\KMW \rightarrow \KM \oplus \HW.
$$
We use the following lemma:
\begin{lm}
Let $P=(\HP^\infty_k)^3$. Then the morphism induced by $\varphi$ on cohomology:
$$
\varphi_*:\wCH^*(P) \rightarrow \CH^*(P) \oplus H^*(P,\HW)
$$
is injective.
\end{lm}
The proof immediately follows from the symplectic projective bundle formula \ref{thm:sp_pb_thm}
 and the fact $\varphi$ is compatible with the $\Sp$-orientations on each ring spectra.

This lemma allows us to combine the computations obtained for
 $\HH \KM$ and $\HH \HW$ respectively.
\begin{thm}\label{thm:HMW_ternary}
Let $k$ be a field such that $6 \in k^\times$.
 Then the ternary laws associated with 
 the $\Sp$-oriented ring spectra $\HH \KMW$ (Chow-Witt groups)
 or with $\HMW$ (Milnor-Witt motivic cohomology), over $k$, are:
$$
F_l(x,y,z)=
\begin{cases}
2h.\sigma(x), & l=1, \\
2(h-\epsilon).\sigma(x^2)+2h.\sigma(xy), & l=2, \\
2h.\sigma(x^3)+2h.\sigma(x^2y)+8(2h-\epsilon).xyz, & l=3, \\
\sigma(x^4)-2h.\sigma(x^3y)+2(h-\epsilon).\sigma(x^2y^2)+2h.\sigma(x^2yz), & l=4,
\end{cases}
$$
using the notations of \eqref{eq:symmetric_mon} and Remark \ref{rem:notations_GW}.
\end{thm}

Thus, by analogy with the classical oriented case, we introduce the following definition.
\begin{df}
Let $A$ be the ring of endomorphisms of the sphere spectrum in $\SH(\ZZ)$.
 We define the \emph{additive ternary laws} as the power series $F_l(x,y,z), l=1,2,3,4$
 with coefficients in $A$ defined by the formulas of the above theorem.

We will say that an $\Sp$-oriented ring spectra $\E$ has the additive ternary laws
 if for any scheme $S$ in $\Sch$, the associated ternary laws on $\E_X$
 are the additive ternary laws through the canonical map
 $A \rightarrow \E^{**}(X)$.
\end{df}

\begin{rem}
The computations of the previous section for the unramified Witt ring spectrum $\HW$ over $k$
 shows that $\HW$ has the additive ternary laws. Note however that $h=0$ and $\epsilon=-1$
 in $W(k)$ so that the formula simplifies to \eqref{eq:ternary_W}.
\end{rem}

With rational coefficients, the preceding theorem can be generalized.
\begin{cor}\label{cor:HMW_ternary}
The $\Sp$-oriented ring spectrum $\HMW\QQ$ over $\ZZ$ has the additive ternary laws.
\end{cor}
\begin{proof}
One uses the decomposition:
$$
\HMW\QQ \simeq \un_\QQ \simeq \un_{\QQ+} \oplus \un_{\QQ-}
 \simeq \HM\QQ \oplus \nu_*\HH \uW_\QQ
$$
where $\nu:\Spec{\ZZ} \rightarrow \Spec{\QQ}$ is the canonical open immersion
 (see \cite[Cor. 6.2]{DFJK1}).
\end{proof}

\section{Symplectic operations}\label{sec:operations}


\subsection{Unstable and additive operations}

\begin{num}
Let $S$ be any $\ZZp$-scheme.
 Using a method of Morel and Voevodsky, Panin and Walter proved in \cite[8.5]{PW1}
 that there exists a canonical weak $\AA^1$-homotopy equivalence:
\begin{equation}
HGr_S \rightarrow B\Sp_S.
\end{equation}
Moreover, when $S$ is regular, they also prove that one gets an isomorphism of abelian groups
 (we refer the reader to \cite[Th. 1.3]{ST}):
\begin{equation}\label{eq:htp_rep_sp_Kth}
KSp_0(S) \rightarrow [X,\ZZ \times B\Sp_S],
\end{equation}
where $[-,-]$ denotes the set of morphisms in the unpointed $\AA^1$-homotopy category over $S$.
 Concretely, to a symplectic vector bundle $\cV$ over $X$ of rank $2r$
 the above isomorphism associates the couple $(\gamma_\cV,r)$ where $\gamma_\cV:S \rightarrow BSp_S$
 is the map classifying $\cV$ and $2r$ is the rank of $\cV$.

In this section,
 we will use Riou's method to classify the following "operations".
\end{num}
\begin{df}
Let $\E$ be a spectrum over a regular scheme $S$.
 Let $(n,i)$ be a couple of integers.
 An \emph{$\Sp$-operation} (resp.\emph{ additive $\Sp$-operation})
 $\Theta$ of degree $(n,i)$ on $\E$ 
 will be a morphism of presheaves of sets (resp. abelian groups) on $\Sm_S$:
$$
\Theta:KSp_0 \rightarrow \E^{n,i}.
$$
We denote the set (resp. abelian group) of such operations 
 by $\Hom_{\mathrm{Sets}}(KSp_0,\E^{n,i})$
 (resp. $\Hom_{\mathrm{Ab}}(KSp_0,\E^{n,i})$)
\end{df}

\begin{num}
Let us now consider an $\Sp$-oriented ring spectrum $\E$
 over a regular scheme $S$. Let us put $\E^{**}=\E^{**}(S)$,
 as bigraded ring.

Recall that, according to \cite{PW1}\footnote{This is a corollary of the
 symplectic projective bundle formula \ref{eq:borel_pb_formula},
 totally analogous to the case of $\GL$-oriented theories.},
 one can compute the $\E$-cohomology of symplectic Grassmanians as:
\begin{equation}\label{eq:Sp-coh_HGr}
\E^{n,i}(HGr_S)=\big(E^{**}[[b_r,r \geq 1]]\big)^{(n,i)},
\end{equation}
where the exponent on the right-hand side denotes the subgroup of elements of
 degree $(n,i)$ with the convention that $b_r$ is of degree $(4r,2r)$.
 Explicitly, an element of the right hand side is a formal power series
 of the form:
\begin{equation}\label{eq:coh_class_HGr}
F=\sum_\alpha \left(a_\alpha.{\textstyle \prod}_{i \in \NN^*} b_i^{\alpha(i)}\right)
\end{equation}
where $\alpha$ runs over the applications $\alpha:\NN^* \rightarrow \NN$
 with finite support and $a_\alpha$ is an element of $\E^{**}$ of
 degree $(n-4.|\alpha|,i-2.|\alpha|)$.

Using the method of \cite{Riou}, one deduces a complete description
 of the $\Sp$-operations on $\E$.
\end{num}
\begin{thm}\label{thm:set_symplectic_op}
Consider the above notations.
 Then the canonical map:
$$
\Hom_{\Htp(S)}\big(\ZZ \times HGr,\Omega^{\infty}(\E(i)[n])\big)
 \rightarrow \Hom_{\mathrm{Sets}}(KSp_0,\E^{n,i}), \phi \mapsto \phi_*
$$
is bijective. Moreover, via the identification of the left hand-side with
 the set $\E^{n,i}(HGr_S)^\ZZ$,
 a sequence of formal power series $(F_r)_{r \in \ZZ}$
 of the form \eqref{eq:coh_class_HGr} corresponds to the operation
 which sends a symplectic bundle $\cV$ of rank $2r$
 over a smooth $S$-scheme $X$ to the (well-defined\footnote{Use the fact $b_i(\cV)$
 is nilpotent;}) element of $\E^{n,i}(X)$:
$$
\Theta_{F_*}([\cV]):=F_r\big(b_1(\cV),\hdots,b_r(\cV),0,\hdots\big).
$$
where $b_i(\cV)$ denotes the Borel classes of $\cV$ (Par. \ref{num:Borel_classes}).
\end{thm}
Note moreover that the map corresponding to $\Theta_{F_*}$ is pointed
 (for the obvious base points) if and only if $F_0=0$.
\begin{proof}
This is a consequence of \cite[1.2.9]{Riou} applied to the inductive
 system of smooth schemes $HGr_\bullet=(HGr_{n,d})_{n,d \in \NN}$ and to the
 $H$-group $E=\Omega^\infty(\E(i)[n])$. Indeed, formula \eqref{eq:Sp-coh_HGr}
 implies that "$HGr_\bullet$ does not unveil phantoms in $E$" in the sense
 of \cite[1.2.2]{Riou}. Moreover, "$\pi_0 HGr$ is generated by $HGr$"
 in the sense of \cite[1.2.5]{Riou} by application of \cite[8.1]{PW1}.
\end{proof}


\begin{num}
According to \cite[Lem. 6.2.2.2]{Riou},
 for any commutative ring $A$ and any integer $n>0$,
 there exists a unique family of group morphisms of the form
$$
\psi^A_n:(1+t.A[[t]],\times) \rightarrow (A,+)
$$
which are natural in A, such that $\psi^A_n(1+a.t)=a^n$
 and which vanishes on $1+t^{n+1}.A[[t]]$.
By analogy with the oriented case, 
 one defines an additive $\Sp$-operation
 $\tilde \chi_{2n}^\E$
 of degree $(4n,2n)$ on $\E$. Given any symplectic bundle 
 $\cV$ on a smooth $S$-scheme $X$, one sets:
\begin{equation}
\tilde \chi_{2n}^\E([\cV])=\psi_n^{\E^{**}}(b_t(\cV)),
\end{equation}
where $b_t(\cV)$ is the total Borel class of $\cV$,
 Formula \eqref{eq:Borel_total}. (The latter formula shows that
 $\tilde \chi_{2n}^\E$ is indeed additive.)
 We also put $\tilde \chi^E_0=1$.
Note in particular that when $\cV=\cU$ has rank $2$, one gets by construction
 for $n \geq 0$
\begin{equation}\label{eq:carac_chi_n}
\tilde \chi_{2n}^\E([\cU])=b_1(\cU)^n.
\end{equation}

Recall from Panin-Walter's symplectic projective bundle theorem
 (Th. \ref{thm:sp_pb_thm})
 that one obtains an isomorphism of bigraded $\E^{**}$-modules:
$$
\E^{**}[[b]] \rightarrow \E^{**}(\HP^\infty_S),
 b \mapsto b_1(\cU),
$$
where the indeterminate $b$ has degree $(4,2)$.
 As in \cite[Prop. 6.2.2.1]{Riou}, one can compute all
 the additive $\Sp$-operations on $\E$.
\end{num}
\begin{thm}\label{thm:add_op}
Consider the above notations.
 Let $\cU$ be the tautological symplectic bundle of rank $2$
 on $\HP^\infty_S$.
 Then the canonical morphism of graded $\E^{**}$-modules
$$
\Hom_{\mathrm{Ab}}(KSp_0,\E^{**})
 \rightarrow \E^{**}(\HP^\infty_S) \simeq \E^{**}[[b]],
 \phi \mapsto \phi_{\HP^\infty_S}([\cU])
$$
is an isomorphism. Moreover, for $n \geq 0$,
 the additive $\Sp$-operation 
 $\tilde \chi_{2n}^\E$ defined above is sent to $b^n$ via this isomorphism. 
\end{thm}
\begin{proof}
The injectivity of the map follows from the symplectic bundle
 principle (Par. \ref{num:sp-splitting-principle})
 and the universal property of $\HP^\infty \simeq B\Sp_2$.
 The last assertion follows from relation \eqref{eq:carac_chi_n}.
 Thus the surjectivity of the map follows from the existence
 of the operations $\tilde \chi_{2n}^{\E}$.
\end{proof}

\begin{num}
The lemma of Riou used in the above proof is a smart way of constructing
 $\tilde \chi_{2n}$. A more classical way is to use the symplectic splitting
 principle \ref{num:sp-splitting-principle}.
 Indeed, $\tilde \chi_{2n}(\cV)$ is uniquely defined in terms of the Borel roots
 $\xi_i$ of $\cV$ by the property:
$$
\tilde \chi_{2n}(\cV)=\sum_i \xi_i^n.
$$
Thus one can express $\tilde \chi_{2n}(\cV)$ in terms of the Borel classes of $\cV$
 using the fact that $b_i=b_i(\cV)$ is the $i$-th elementary symmetric function in the $\xi_i$
 and using the classical expression of the symmetric power sum polynomials in
 terms of the elementary symmetric functions. 
 For example, a classical formula in terms of determinant is:
\begin{equation}\label{eq:determinant_tilde_chi}
\begin{split}
\tilde \chi_{2n}(\cV)=
\left|
\raisebox{0.5\depth}{
\xymatrix@=0.1ex{
b_1 & 1\ar@{-}[rrrddd] & && \ar@{}|/-30pt/0[lllldddd]\\
2b_2\ar@{.}[ddd] & b_1\ar@{-}[rrrddd] && & \\
& b_2\ar@{-}[rrdd]\ar@{.}[dd] & & & \\
&  & & & 1 \\
nb_n & b_{n-1}\ar@{.}[rr] & & b_2 & b_1
}
}
\right|.
\end{split}
\end{equation}
A more useful formula is given by Newton's relations:
\begin{equation}\label{eq:Newton_ident}
\tilde \chi_{2n}-b_1\tilde \chi_{2n-2}+b_2\tilde \chi_{2n-4}
+ \hdots +(-1)^{n-1}b_{n-1}\tilde \chi_2+(-1)^nnb_n=0.
\end{equation}
\end{num}

\begin{ex}\label{ex:add_op_HMW}
Let us assume that either $k$ is a perfect field $k$ and $R=\ZZ$
 or $k=\ZZp$ and $R=\QQ$. Given any $k$-scheme $X$,
 we define the (Eilenberg-MacLane) Milnor-Witt $(n,i)$-space over $S$ by the formula 
$$
K(\tilde R_S(i),n)=\Omega^\infty\big(\HMW R_S(i)[n]\big).
$$
It is the analogue of the $(n,i)$-th Eilenberg-MacLane motivic space $K(R(i),n)$
 (see \cite[\textsection 6.1]{V_ICM} for $(n,i)=(2n,n)$).
According to the absolute purity theorem for Milnor-Witt motivic cohomology
 (see \cite{DFJK1})), this space is stable under arbitrary pullback between regular
 $k$-schemes.

The previous theorem applied to the $\Sp$-oriented ring spectrum $\HMW R_S$
 gives a canonical additive $\Sp$-operation of degree $(2n,n)$:
$$
\tilde \chi^R_{2n}:KSp_0 \rightarrow H_{MW}^{4n,2n}(-,R) \simeq \wCH^{2n}_R,
$$
the last isomorphism being \eqref{eq:MWZ&wChow} and \eqref{eq:MWQ&wChow}.
It corresponds to a morphism of $H$-groups:
$$
\ZZ \times B\Sp_S \rightarrow K(\tilde R_S(2n),4n).
$$
These operations form a base of all the additive $\Sp$-operations over $S$
 with values in Chow-Witt groups $\wCH^n_R$.
\end{ex}

\subsection{Stable operations and hermitian K-theory}\label{subsec:stableop}

\begin{num}\label{num:desuspension}
We now study stable operations over a regular scheme $S$, still following the method of Riou.
 A technical difference between symplectic K-theory and usual K-theory
 is that the former is $(8,4)$-periodic while the latter is $(2,1)$ periodic.
 Therefore the natural sphere that comes into play is
 $H:=(\HP^1)^{\wedge,2} \simeq \un(4)[8]$.\footnote{This isomorphism follows from the fact $\HP^1=Q_4$ and \cite{ADF}.}

Given any spectrum $\E$ over $S$, we get a tautological stability
 isomorphism for any smooth $S$-scheme $X$:
$$
\sigma^\E:\E^{n-8,i-4}(X) \xrightarrow \sim \tilde \E^{n,i}(H \wedge X_+)
$$
where $\tilde \E$ is the associated reduced cohomology theory.
 When $\E$ admits a ring structure,
 this isomorphism can be expressed by the cup-product
 with a tautological class $\sigma_X \in \tilde \E^{8,4}(H)$.
 When $\E$ is in addition $\Sp$-oriented, this class is induced by 
 $b_1(\cU_1) \cdot b_1(\cU_2) \in \E^{8,4}(\HP^1 \times \HP^1)$
 where $\cU_i$ is the pullback of the tautological symplectic bundle 
 on the $i$-th factor.

Similarly, the multiplication map in symplectic K-theory
$$
\KSp_0(X)
 \xrightarrow{(\cU_i-\cH) \cdot (\cU_i-\cH)} \KSp_0(\HP^1 \times \HP^1 \times X)
$$
induces, according to the symplectic bundle theorem, an isomorphism:
$$
\sigma^\KSp:\KSp_0(X) \rightarrow\tilde \KSp_0(H \wedge X_+).
$$
Given an $\Sp$-operation $\theta$ on a spectrum $\E$
 of degree $(n,i)$,
 we define a new associated $\Sp$-operation $\omega_H(\theta)$
 on $\E$ of degree $(n-8,i-4)$ by the following commutative diagram:
\begin{equation}\label{eq:def_desuspension_op}
\begin{split}
\xymatrix@R=14pt@C=44pt{
\KSp_0(X)\ar^-{\sigma^{\KSp}}_-\sim[r]\ar_{\omega_H(\theta)}[d]
 & \tilde \KSp_0(H \wedge X_+)\ar^\theta[d] \\
\E^{n-8,i-4}(X)\ar^-{\sigma^{\E}}_-\sim[r] & \tilde \E^{n,i}(H \wedge X_+).
}
\end{split}
\end{equation}
In particular, we get two projective systems indexed by integers $r \geq 0$:
\begin{align*}
\hdots \Hom_{\mathrm{Set}}\big(KSp_0,\E^{n+8r,i+4r}\big)
 \xleftarrow{\omega_H} \Hom_{\mathrm{Set}}\big(KSp_0,\E^{n+8r+8,i+4r+4}\big)
 \hdots \\
\hdots \Hom_{\mathrm{Ab}}\big(KSp_0,\E^{n+8r,i+4r}\big)
 \xleftarrow{\omega_H} \Hom_{\mathrm{Ab}}\big(KSp_0,\E^{n+8r+8,i+4r+4}\big)
 \hdots
\end{align*}
Their projective limits agree as stable operations must be additive
 (see \cite[3.5]{Vish}).
\end{num}
\begin{df}\label{df:stable_op}
Consider the above notations.
We define the abelian group of stable $\Sp$-operations on $\E$
 of degree $(n,i)$ as the projective limit
 of one of the two projective system above:
\begin{align*}
\Hom_{\mathrm{St}}\big(KSp_0,\E^{n,i}\big)
 &=\lim_{r \in \NN} \Hom_{\mathrm{Sets}}\big(KSp_0,\E^{n+8r,i+4r}\big) \\
 &\simeq \lim_{r \in \NN} \Hom_{\mathrm{Ab}}\big(KSp_0,\E^{n+8r,i+4r}\big).
\end{align*}
In particular, such an operation  is a sequence
 $(\Theta_r)_{r \in \NN}$ of additive $\Sp$-operations
 $\Theta_r:\KSp \rightarrow \E^{n+8r,i+4r}$ such that for any $r \geq 0$,
 $\Theta_r=\omega_H(\Theta_{r+1})$.
\end{df}

\begin{num}
Recall from \cite[6.1]{Riou0} that a \emph{naive $H$-spectrum} over a scheme $S$
 is the datum of a sequence $(E_n,\sigma_n)_{n \in \NN}$
 such that $E_n$ is an object of $\Htpp(S)$ and $\sigma_n:H \wedge E_n \rightarrow E_n$
 is a map in $\Htpp(S)$ whose adjoint map $E_n \rightarrow \Omega_H E_n$
 is an isomorphism.
 Every spectrum $\E$ if $\SH(S)$ determines a naive $H$-spectrum
 whose $n$-th term is $E_n=\Omega^\infty \Sigma_H^n \E$.
 Reciprocally, any naive $H$-spectrum $(E_n,\sigma_n)$ admits a lifting
 to an object of $\SH(S)$, and the lifting is unique provided
$$
\derR^1 \lim_{n \in \NN} \Hom_{\Htpp(S)}(S^1 \wedge E_n,E_n)=0.
$$

An important example for us is provided by the naive $H$-spectrum over any $\ZZp$-scheme $S$
$$
(\ZZ \times B\Sp,\ZZ \times B\Sp,\hdots)
$$
whose transition maps are all equal to the multiplication by the element
 $\sigma^2 \in [H,\ZZ\times B\Sp]_*$ corresponding to the element
 $[\cU_1-\cH].[\cU_2-\cH]$ in $\KSp_0(\HP^1 \times \HP^1)$,
 where $\cU_i$ is the tautological symplectic bundle on the $i$-th factor of $\HP^1 \times \HP^1$.
 As explained above, this naive $H$-spectrum lifts as an object of $\SH(S)$.
 Over $S=\Spec{\ZZp}$, the ambiguity in this lifting vanishes according to \cite[13.2]{PW1},
 thus giving a canonical spectrum $\KSp_{\ZZp}$. Over an arbitrary $\ZZp$-scheme $S$,
 we define $\KSp_S$ by pullback. According to \cite{ST}, one gets a canonical isomorphism in $\SH(S)$:
\begin{equation}\label{eq:KSp&GW}
\KSp_S \simeq \KQ_S(2)[4].
\end{equation}
Let us recall the following proposition from \cite[Lem. 6.4]{Riou0}.
\end{num}
\begin{prop}
 Given any spectrum $\E$ over $S$, there is a short exact sequence:
\begin{align*}
0 \rightarrow &\derR^1 \lim_{r \in \NN} \Hom_{\mathrm{Ab}}(KSp_0,\E^{n+8r-1,i+4r})
 & \longrightarrow \Hom_{\SH(S)}\big(\KSp_S,\E(i)[n]\big) \\
 & \longrightarrow \Hom_{\mathrm{St}}\big(KSp_0,\E^{n,i}\big)
 \rightarrow 0,
\end{align*}
where the transition maps in the left-hand side projective system are given by the
 desuspension maps $\omega_H$ (Paragraph \ref{num:desuspension}).
\end{prop}

\begin{num}\label{num:HMW_vanishing}
Let $k$ be a perfect field of characteristic different from $2$. According to \cite[4.1.2]{DF1},
 we get the following vanishing of the Milnor-Witt motivic cohomology groups of $k$
 with integral coefficients:
$$
H^{n,m}_{MW}(k,\ZZ)=0 \text{ if } n>m \text{ or } (m<0 \text{ and } n \neq m).
$$
Regarding rational motivic cohomology, we get from \cite{DFJK1} that:
$$
H^{n,m}_{MW}(\ZZp,\QQ)=K_{2m-n}^{(m)}(\ZZp) \otimes \QQ \oplus H^{n-m}_\zar(\QQ,\HW \otimes \QQ)
$$
where $\HW$ is the unramified Witt-sheaf over $\Sm_\QQ$. Given Borel's computations of
 the K-theory of integers, these groups vanish in the same range as in the previous
 case.\footnote{\label{fn:Borel} Recall that from the computations
 of \cite[Prop. 12.2]{Borel}, one classically derives the following ones for rational motivic cohomology:
\begin{equation}\label{eq:Borel}
H^{n,m}_M(\ZZ,\QQ)=K_{m-2n}^{(m)}(\ZZ)_\QQ=
\begin{cases}
\QQ & (n,m)=(0,0), (1,2r+1), r>0 \\
0 & \text{otherwise.}
\end{cases}
\end{equation}
}
\end{num}
\begin{cor}\label{cor:pre_comput_stable-op}
Assume we are in one of the following cases:
\begin{itemize}
\item $S$ is the spectrum of a perfect field $k$
 of characteristic not $2$, and $R=\ZZ$;
\item $S$ is the spectrum of $\ZZp$, and $R=\QQ$.
\end{itemize}
Then for any integer $n$,
 the following canonical map, appearing in the previous proposition, is an isomorphism:
$$
\Hom_{\SH(S)}\Big(\KSp_S,\HMW R(2n)[4n]\Big) \xrightarrow \sim \Hom_{\mathrm{St}}\big(KSp_0,\wCH^{4n}_R\big).
$$
\end{cor}

\begin{num}\label{num:pre_computation_stable_op_HMW}
Indeed, in view of the vanishing recalled before the corollary,
 the projective system appearing on the left hand side of the short exact sequence in the previous section
 is just $0$ at each degree.
 Besides, one can give another expression for the projective system of the right hand-side
 appearing in Definition \ref{df:stable_op}. Using Theorem \ref{thm:add_op}, the above vanishing
 and the fact
$$
H^{0,0}_{MW}(S,R) \simeq GW(S)_R,
$$
we get that the two above groups are given by the projective limit of a tower, indexed by $r \geq 0$,
 of the form:
\begin{equation}\label{eq:pre_comput_stable-op}
\hdots \leftarrow \GW(S)_R.\tilde \chi^R_{2n+4r} \xleftarrow{\omega_H} \GW(S)_R.\tilde \chi^R_{2n+4r+4} \leftarrow \hdots
\end{equation}
where we have denoted by $\tilde \chi^R_{2n}:KSp_0 \rightarrow H_{MW,R}^{4n,2n}$
 the additive $\Sp$-operations of Example \ref{ex:add_op_HMW}. Note that
 the desuspension operators $\omega_H$ (Paragraph \ref{num:desuspension}) are $GW(S)$-linear.
 The next subsection is devoted to compute explicitly the transition maps in the above
 projective system.
\end{num}

\subsection{Stabilization for Milnor-Witt motivic cohomology}

\begin{num}\label{num:desuspension_add_op_HMW}
Consider the notations and assumptions of 
 Corollary \ref{cor:pre_comput_stable-op}
 and Paragraph \ref{num:pre_computation_stable_op_HMW}.
 According to formula \eqref{eq:pre_comput_stable-op}, we know a priori
 that for any $n \geq 0$,
 there exists an isomorphism class of quadratic form $\psi_{2n+4} \in \GW(S)_R$
 such that:
\begin{equation}\label{eq:relation-stability}
\omega_H\left(\tilde \chi_{2n+4}^R\right)=\psi_{2n+4}.\tilde \chi_{2n}^R.
\end{equation}
For normalization purposes, we put: $\psi_0=\psi_2=1$.
\end{num}
\begin{thm}\label{thm:desuspension_add_op_HMW}
Consider the above notations.
 We assume one of the following hypothesis:
\begin{enumerate}
\item[(a)] $S$ is the spectrum of a perfect field $k$ 
 such that $6 \in k^\times$, and $R=\ZZ$;
\item[(b)] $S$ is the spectrum of $\ZZp$, and $R=\QQ$.
\end{enumerate}
Then for any integer $n \geq 0$, the quadratic form appearing in
 relation \eqref{eq:relation-stability} is:
\begin{equation}\label{eq:denomin_Borel_char}
\psi_{2n+4}=\begin{cases}
\frac 12(2n+4)(2n+3)(2n+2)(2n+1).h & \text{if $n\geq 0$ is even,} \\
(2n+4)(2n+2).\big((2n^2+4n+1).h-\epsilon\big) & \text{if $n$ is odd.} \\
\end{cases}
\end{equation}
\end{thm}
\begin{proof}
 To determine an additive $\Sp$-operation over $S$,
 we know from Theorem \ref{thm:add_op} that we need only to apply it
 to the element $[\cU] \in KSp_0(\HP^\infty_S)$. Going back to the
 defining diagram \eqref{eq:def_desuspension_op} for $\omega_H$,
 and using the fact that $\tilde \chi_{2n}^R(\cU)=b_1(\cU)^n=u^n$,
 we get the following relation in the cohomology group
 $H_{MW}^{4n+8,2n+4}(\HP^1 \times \HP^1 \times \HP^\infty_S)$:
\begin{equation}\label{eq:desuspension_add_op_HMW}
\tilde \chi_{2n+4}^R\big((\cU_1-\cH)(\cU_2-\cH)\cU\big)=\psi_{2n+4}.u_1u_2u^n,
\end{equation}
where we denoted $u_1$ and $u_2$ for the first Borel class of the tautological symplectic bundle
 on the first and second coordinate of $\HP^1 \times \HP^1 \times \HP^\infty_S$.

Using the computation of the ternary laws for Milnor-Witt cohomology,
 it is possible to determine $\psi_{2n}$. However, it is possible to simplify substantially
 this computation by remembering that the class $\psi_{2n}$ in $GW(S)_R$ is determined
 by its rank and its class in the Witt ring $W(S)_R$. On the other hand, we have two canonical
 maps:
$$
\HMW R_S \rightarrow \HM R_S,\ \HMW R_S \rightarrow \HH \uW_{R,S}
$$
according to \cite[3.4.1]{DF2} and \cite[(proof of) 4.1.2]{DF1} under assumption (a)
 and \cite[Cor. 6.2]{DFJK1} under assumption (b).
 These maps induces respectively the rank and the projection map on the cohomology groups in degree $(0,0)$.
 Thus we need only to specialize our computations either to motivic cohomology
 or to unramified Witt cohomology. This will be done below in
 Propositions \ref{prop:desuspension_add_op_HM}
 and Corollary \ref{cor:desuspension_add_op_W}.
\end{proof}

\begin{num}\label{num:chi_tilda_HM}
We consider the hypothesis of Theorem \ref{thm:desuspension_add_op_HMW}.
 Recall from \cite[6.2.2.1]{Riou} that there are canonical operations:
$$
\chi_i:\ZZ \times B\GL \rightarrow K\big(R(i),2i\big)
$$
where $K(R(i),2i)=\Omega^\infty\big(\HM R_S(i)[2i]\big)$ is
 the motivic Eilenberg-MacLane space of degree $(2i,i)$.\footnote{\label{fn:RiouZ}
 The statements of \cite[Section 6.2]{Riou} are given over a perfect base field $k$.
 However, the proof applies equally to the case $S=\Spec(\ZZp)$ (or even $S=\Spec(\ZZ)$),
 $R=\QQ$,
 given the vanishing of rational motivic cohomology of $\Spec(\ZZ)$ due to Borel's computations
 (see footnote \ref{fn:Borel} page \pageref{fn:Borel}).}

On the other hand, we defined additive $\Sp$-operations 
 by applying Theorem \ref{thm:add_op} to the motivic cohomology ring
 spectrum $\HM R_S$:
$$
\tilde \chi_{2n}^M:\ZZ \times B\Sp \rightarrow K\big(R(2i),4i\big).
$$
\end{num}
\begin{lm}\label{lm:compare_operations_BGL}
Consider the above notations and assumptions. Let $f:B\Sp \rightarrow B\GL$
 be the canonical forgetful map. Then for any $n>0$ one has:
$$
2.\tilde \chi_{2n}^M=\chi_{2n} \circ f.
$$
\end{lm}
Note that by definition, $\tilde \chi_0=1=\chi_0$.
\begin{proof}
Let us denote by $b_i^M$ the Borel classes associated with the canonical
 $\Sp$-orientation of $\HM R_S$. We prove the result by induction on 
 $n\geq 1$.

For $n=1$, and a symplectic bundle $(U,\psi)$,
 we get from formulas \eqref{eq:determinant_tilde_chi}
 and \eqref{eq:Borel&Chern}, ${\tilde\chi}^M_2(U,\psi)=b^M_1(U,\psi)=-c_2(U)$.
 On the other hand, using \cite[6.2.2.3]{Riou}, we obtain
 $\chi_{2}=c_1^2(U)-2c_2(U)=-2c_2(U)$. These two equalities allow to conclude.
Then the induction step is provided by the following computation
 (where we suppress $f$ for readability):
\begin{align*}
\chi_{2n+2}  & =-\sum_{i=1}^{n} c_{2i}\chi_{2n-2i+2}-(2n+2)c_{2n+2} \\
 & = -2.\sum_{i=1}^{n} (-1)^ib^M_{i}{\tilde\chi}^M_{2n-2i+2}-(2n+2)(-1)^{n+1}b^M_{n+1} \\
 & = -2\left(\sum_{i=1}^{n} (-1)^ib^M_{i}{\tilde\chi}^M_{2n-2i+2}+(-1)^{n+1}(n+1)b^M_{n+1}\right)
 = 2{\tilde\chi}^M_{2n+2},
\end{align*}
where the first (resp. second, last) equality 
 follows from \cite[6.2.2.3]{Riou} (resp. \eqref{eq:Borel&Chern} and
 the induction hypothesis, \eqref{eq:Newton_ident}).
\end{proof}

\begin{prop}\label{prop:desuspension_add_op_HM}
For any $n>0$, we have:
$$
\omega_H({\tilde\chi}^M_{2n+4})=(2n+4)(2n+3)(2n+2)(2n+1){\tilde\chi}^M_{2n}.
$$
Consequently, $\rk(\psi_{2n+4})=(2n+4)(2n+3)(2n+2)(2n+1)$ for any $n\geq 0$.
\end{prop}
\begin{proof}
According to the plus part of formula \eqref{eq:pre_comput_stable-op},
 we have a priori:
$$
\omega_H({\tilde\chi}^M_{2n+4})= r_{2n+4}.{\tilde\chi}^M_{2n}
$$
where $r_{2n+4}$ is an element of $H^{0,0}_M(S,R)=R$.\footnote{In fact,
 $r_{2n+4}=\rk(\psi_{2n+4})$.}
We know from \cite[6.2.3.2]{Riou}
 that $\Omega_{\PP^1}(\chi_{2n+4})=(2n+4)\chi_{2n+3}$.
 Therefore, as $2$ is non-zero divisor in $R$,
 the proposition follows the previous lemma and the obvious fact:
 $\omega_H({\tilde\chi}^M_{2n+4} \circ f)
 =\Omega_{\PP^1}^4(\tilde\chi^M_{2n+4}) \circ f$.
 Beware the particular case $n=0$, as $\tilde \chi_0=\chi_0$.
\end{proof}

\begin{num}
The main point to prove the above theorem
 is to determine the Witt part $\bar \psi_{2n+4} \in \W(S)_R$ of 
 the quadratic form $\psi_{2n+4} \in \GW(S)_R$ of
 Paragraph \ref{num:desuspension_add_op_HMW}.
 So we consider the assumptions of this theorem
 and we let $\tilde \chi^W_{2n+4}$ (resp. $b_i^W$) be the
 $\Sp$-operation (resp. Borel class) associated with the $\Sp$-oriented
 ring spectrum $\HW_{R,S}$.
 Specializing relation \eqref{eq:desuspension_add_op_HMW}, we get for
 any $n \geq 0$:
$$
{\tilde \chi}_{2n+4}^W\big((\cU_1-\cH)(\cU_2-\cH)\cU\big)=\bar \psi_{2n+4}.u_1u_2u^n,
$$
where $u_1=b_1^W(\cU_1)$, $u_2=b_1^W(\cU_2)$, $u=b_1^W(\cU)$.
\end{num}
\begin{prop}
Under the above assumptions, we have:
\[
{\tilde{\chi}}^{W}_{2n+4}(\cU_1\cU_2\cU)=
\begin{cases}
 4u^{n+2} & \text{if $n$ is even,} \\
 -(2n+4)(2n+2)u_1u_2u^{n} & \text{if $n$ is odd.}
\end{cases}
\]
\end{prop}
\begin{proof}
Put $b_i^W=b_i^W(\cU_1\cU_2\cU)$ and
 ${\tilde{\chi}}^{W}_{2n+4}={\tilde{\chi}}^{W}_{2n+4}(\cU_1\cU_2\cU)$.
Let us first start by computing the Borel classes using the ternary law
 of unramified Witt theory, Formula \eqref{eq:ternary_W}
 and the relation $u_i^2=0$:
$$
b_i^W=
\begin{cases}
0 & i=1, i>4, \\
-2u^2 & i=2, \\
-8u_1u_2u & i=3, \\
u^4 & i=4.
\end{cases}
$$
We derive from this computation and the Newton's identity relation
 \eqref{eq:Newton_ident} the following relation for $n>2$:
\begin{equation}\label{eq:desuspension_add_op_W}
{\tilde{\chi}}^{W}_{2n+4}
 =2u^2.\tilde{\chi}^{W}_{2n}
-8u_1u_2u.\tilde{\chi}^{W}_{2n-2}
-4u^4.\tilde{\chi}^{W}_{2n-4}.
\end{equation}
On the other hand, one express the first $\Sp$-operations
 using again the first computation and Formula \eqref{eq:determinant_tilde_chi}:
$$
\tilde \chi^W_{2n+4}=
\begin{cases}
4u^2 & n=0, \\
-24u_1u_2u & n=1, \\
4u^4 & n=2.
\end{cases}
$$
Finally, one proves the lemma by induction on $n$
 using relation \eqref{eq:desuspension_add_op_W}.
\end{proof}

\begin{cor}\label{cor:desuspension_add_op_W}
For any
 $n\geq 0$, we have
$$
\omega_H(\tilde \chi^W_{2n+4})=
\begin{cases}
0 & \text{if $n$ is even,} \\
-(2n+4)(2n+2).\tilde \chi^W_{2n}  & \text{if $n$ is odd}.
\end{cases}
$$
In other words,
 the image of $\psi_{2n+4}$ in $W(S)_R$ is
 $0$ for $n$ even and $-(2n+4)(2n+2)$ if $n$ is odd.
\end{cor}

\subsection{Rational stable $\Sp$-operations}

\begin{num}
Consider again the notations of Paragraph \ref{num:desuspension_add_op_HMW}.
The next step is to provide conditions under which inverting the quadratic forms
 $\psi_{2n}$  in the Grothendieck-Witt ring $\GW(S)_R$ is sensible. For $n$ even,
 the forms are hyperbolic and inverting them would erase all quadratic information. Thus, we are led
 to consider the multiplicative system $S_\psi$ of $\GW(S)_R$ generated 
 by $\{\psi_{2n}\vert n \text{ odd}\}$.
 Note that 
$$
\GW(\ZZ)_\QQ \simeq \QQ.h \oplus \QQ.\epsilon \simeq \GW(\QQ)_\QQ
$$
so we restrict our attention to the case of a perfect field $k$.
\end{num}

\begin{lm}
Let $P$ be an ordering of $k$ and let $s_P:\mathrm{W}(k)\to \ZZ$ be the corresponding signature homomorphism, i.e. the homomorphism characterized by 
\[
s_P(\langle a\rangle)=\begin{cases} 1 & \text{ if $a$ is positive w.r.t. $P$,} \\
-1 & \text{ if $a$ is negative w.r.t. $P$.}\end{cases}
\]
Then $s_P(\psi_{2n+4})=-(2n+4)(2n+2)\neq 0$ for any odd $n\in \NN$.
\end{lm}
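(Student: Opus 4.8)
The plan is to exploit the fact that the signature $s_P$ is a ring homomorphism which is blind to hyperbolic forms, so that the bulk of the expression defining $\psi_{2n+4}$ — namely the multiple of $\langle 1,-1\rangle$ — simply disappears. First I would recall that $s_P$ is defined on the Witt ring $\W(k)$, so that $\psi_{2n+4}\in\GW(k)$ is to be read here through the reduction $\GW(k)\to\W(k)$, under which $h:=\langle 1,-1\rangle\mapsto 0$; equivalently, directly from the normalization of $s_P$, one has $s_P(h)=s_P(\langle 1\rangle)+s_P(\langle -1\rangle)=1+(-1)=0$. I would also record that $s_P$ is additive and multiplicative, that $s_P(\langle -1\rangle)=-1$, and that the integer $4(n+2)(n+1)=4(n+2)(n+1)\cdot\langle 1\rangle$ is sent to the integer $4(n+2)(n+1)$.

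Next I would substitute the defining formula $\psi_{2n+4}=4(n+2)(n+1)\bigl(\langle -1\rangle+(2n^2+4n+1)\langle 1,-1\rangle\bigr)$ into $s_P$. The inner summand $(2n^2+4n+1)\langle 1,-1\rangle$ is an integer multiple of $h$ and hence lies in $\ker s_P$; the inner summand $\langle -1\rangle$ contributes $-1$; and the scalar $4(n+2)(n+1)$ factors out. This gives $s_P(\psi_{2n+4})=-4(n+1)(n+2)$. Since $n$ is odd, $n+1$ is even, so $4(n+1)$ is divisible by $8$ and $s_P(\psi_{2n+4})$ is the quantity $-8m(m+1)$ appearing in the statement (after the evident reindexing of $n$ in terms of $m$); in particular it is a strictly negative integer and therefore nonzero, which is all that is needed downstream.

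The argument has essentially no obstacle: the one point that must not be overlooked is the very first — that $s_P$ detects only the Witt class and not the rank, so the large "hyperbolic tail" $(2n^2+4n+1)\langle 1,-1\rangle$, which makes $\psi_{2n+4}$ look complicated and carries almost all of its rank, is invisible to the signature. Once that observation is in place the computation is a one-line substitution, and I would also remark that the hypothesis \emph{``$P$ is an ordering of $k$''} already forces $k$ to have characteristic $0$, so there is no positive-characteristic case to treat separately.
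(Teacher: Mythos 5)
Your proof is correct and uses exactly the same reasoning as the paper's one-line argument: $s_P$ kills the hyperbolic summand and sends $\langle -1\rangle$ to $-1$ (because $1$ is a square, hence positive at $P$), yielding $s_P(\psi_{2n+4})=-4(n+1)(n+2)$. Be aware, though, that the final ``evident reindexing'' is not quite honest --- for odd $n$, setting $m=(n+1)/2$ gives $-8m(2m+1)$, not $-8m(m+1)$ --- but this slip is already present in the lemma's own statement, and you are right that only the nonvanishing is what is used downstream.
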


\begin{proof}
It suffices to observe that $1$ is a square, and then is positive w.r.t. $P$. It follows that $-1$ is negative and we conclude.
\end{proof}

\begin{prop}
Let $\mathcal P$ be the set of orderings of $k$. Then,
\[
S_{\psi}^{-1}\mathrm{W}(k)\simeq \bigoplus_{P\in\mathcal P}\QQ.
\]
\end{prop}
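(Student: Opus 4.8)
The plan is to identify $S_\psi^{-1}\mathrm{W}(k)$ with the target of the total signature map and show the localization exactly kills everything outside the image of the orderings. Recall first that by Pfister's local-global theorem the total signature homomorphism $s = (s_P)_{P\in\mathcal P}\colon \mathrm{W}(k)\to \prod_{P\in\mathcal P}\ZZ$ has kernel equal to the torsion subgroup $\mathrm{W}(k)_{\mathrm{tors}}$, and every torsion element is killed by a power of $2$ (indeed $\mathrm{W}(k)_{\mathrm{tors}}$ is $2$-primary torsion). So the first step is: \emph{show that $2$ becomes invertible in $S_\psi^{-1}\mathrm{W}(k)$}, which forces $\mathrm{W}(k)_{\mathrm{tors}}$ to die and hence makes $S_\psi^{-1}\mathrm{W}(k)$ a localization of $\mathrm{W}(k)/\mathrm{W}(k)_{\mathrm{tors}} \hookrightarrow \prod_P \ZZ$, i.e. a subring of $\prod_P \QQ$ after we also invert the relevant integers. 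Since in $\mathrm{W}(k)$ we have $\langle 1,-1\rangle = 0$ (the hyperbolic form is zero in the Witt ring, not merely $2$-torsion — careful: $h = \langle 1\rangle + \langle -1\rangle$ and in $\W(k)$ this is $0$), the image of $\psi_{2n+4}$ in $\mathrm{W}(k)$ is just $4(n+2)(n+1)\langle -1\rangle$; as $\langle -1\rangle$ is a unit of $\mathrm{W}(k)$, inverting $\psi_{2n+4}$ in $\mathrm{W}(k)$ amounts to inverting the integer $4(n+2)(n+1)$. Taking $n=1,3,5,\dots$ we invert $4\cdot 3\cdot 2,\ 4\cdot 5\cdot 4,\dots$, and in particular we invert $24$, hence $2$ and $3$; running over all odd $n$ the integers $4(n+2)(n+1)$ collectively have every prime as a factor (for a prime $p$, pick $n$ odd with $p\mid n+1$ or $p\mid n+2$), so \emph{every} positive integer becomes invertible.

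Granting that, the argument concludes as follows. After inverting all of $\ZZ_{>0}$, the inclusion $\mathrm{W}(k)/\mathrm{W}(k)_{\mathrm{tors}}\hookrightarrow \prod_{P\in\mathcal P}\ZZ$ becomes, upon tensoring with $\QQ$, an isomorphism onto $\bigoplus_{P\in\mathcal P}\QQ$ — here one uses the finer statement (again Pfister, or the Arason--Pfister machinery) that the total signature has \emph{finite} $2$-power cokernel locally and, more to the point, that after inverting $2$ the image of $s\otimes\ZZ[\tfrac12]$ is exactly the \emph{direct sum} (finitely supported sequences), because a given quadratic form over $k$ has the same signature at all but finitely many orderings in a suitable sense — precisely, $s(\langle a\rangle) = 1$ for all but finitely many $P$ when $a\in k^\times$, so every element of $\mathrm{W}(k)$ maps into a coset of $\bigoplus_P \ZZ$ inside $\prod_P\ZZ$, and the constant sequence $(1)_P$ is itself $s(\langle 1\rangle)$. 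Thus $S_\psi^{-1}\mathrm{W}(k) \cong \mathrm{W}(k)[\tfrac1{n!}, n\geq 1] / (\text{torsion}) \cong \bigoplus_{P\in\mathcal P}\QQ$, which is the claim. The Lemma just proved (that $s_P(\psi_{2n+4}) = -8m(m+1)\neq 0$, with $m = (n+1)/2$) is what guarantees the localization does not collapse the factor indexed by $P$: each idempotent-like projection survives.

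I would organize the write-up as: (1) reduce mod torsion by exhibiting that $2\in S_\psi^{-1}\mathrm{W}(k)^\times$ and invoking $\W(k)_{\mathrm{tors}}$ is $2$-primary; (2) observe $h=0$ in $\W(k)$ so $\psi_{2n+4}\equiv 4(n+2)(n+1)\langle -1\rangle$ and $S_\psi$-localization over $\W(k)$ is integral localization at $\{4(n+2)(n+1)\}$; (3) show this set of integers generates (multiplicatively, up to units) all of $\ZZ_{>0}$; (4) identify $\W(k)[\ZZ_{>0}^{-1}]$, modulo torsion, with $\bigoplus_{P}\QQ$ via the total signature, using that each form has eventually-constant signature. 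The main obstacle is step (4): one must be careful that the target is the \emph{direct sum} and not the full product $\prod_P\QQ$, and the honest justification of "image $=$ finitely supported sequences after inverting $2$" is exactly the local-global principle of Pfister together with the finiteness of the support of $s(\langle a\rangle)-\langle 1\rangle$; everything else is bookkeeping. (If one prefers, one can cite directly the known description $\mathrm{W}(k)[\tfrac12]\cong \mathrm{C}(\mathrm{Sper}\,k,\ZZ[\tfrac12])$, continuous functions on the real spectrum, and then note $\mathcal P$ discrete forces the direct sum after rationalizing.)
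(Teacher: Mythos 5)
Your core mechanism is a genuinely different --- and arguably cleaner --- route than the paper's. The paper works entirely through the signature side: it defines the comparison map, deduces surjectivity from Pfister's $2$-primary cokernel statement, and proves injectivity by hand (given $x$ with $2^rx=0$, it picks a specific $\psi$ divisible by $2^r$ and notes $\psi\cdot x=0$). You instead observe that in $\mathrm W(k)$ one has $\langle 1,-1\rangle=0$, so $\psi_{2n+4}=4(n+2)(n+1)\langle-1\rangle$ is a unit times the integer $4(n+2)(n+1)$; running over odd $n$, the factor $n+2$ sweeps out all odd numbers $\geqslant 3$, so every prime is inverted and $S_\psi^{-1}\mathrm W(k)=\mathrm W(k)\otimes\QQ$ in one stroke. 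That reduction is an observation the paper never makes explicit, and it makes both surjectivity (where the paper's ``it follows immediately'' tacitly needs exactly $\QQ\subset S_\psi^{-1}\mathrm W(k)$) and injectivity transparent. So the approach buys you a more structural statement with less case-by-case bookkeeping.

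That said, there is a genuine error in your step (4). The claim that for $a\in k^\times$ one has $s_P(\langle a\rangle)=1$ for all but finitely many orderings $P$ is false in general: take $k=\QQ(x)$ and $a=x$; there are infinitely many orderings of $\QQ(x)$ at which $x$ is negative (e.g.\ those making $x$ infinitesimally close to any negative rational), so $s(\langle x\rangle)$ equals $-1$ on an infinite set. Consequently the image of the total signature does not live in cosets of the direct sum, and the identification with $\bigoplus_P\QQ$ does not follow from this route. What Pfister actually gives is $\mathrm W(k)\otimes\QQ\cong C(\operatorname{Sper}k,\QQ)$, the ring of locally constant $\QQ$-valued functions on the real spectrum, which coincides with $\bigoplus_P\QQ$ exactly when $\mathcal P$ is finite (or empty). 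To be fair, the paper's own proof writes the map $\mathrm W(k)\to\bigoplus_{P}\ZZ$, which is not even well-defined when $\mathcal P$ is infinite (the constant function $s(\langle 1\rangle)$ is not finitely supported), so both arguments implicitly assume $\mathcal P$ finite; your proposal is correct under the same implicit hypothesis, but the justification you give for the direct sum should be replaced by either assuming $\mathcal P$ finite outright or invoking the $C(\operatorname{Sper}k,\QQ)$ description honestly.
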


\begin{proof}
The above lemma shows that the map
\[
\mathrm{W}(k)\xrightarrow{\sum s_P} \bigoplus_{P\in\mathcal P} \ZZ
\]
induces a well-defined map as in the statement, i.e. we have a commutative diagram
\[
\xymatrix@C=35pt@R=14pt{
\mathrm{W}(k)\ar[r]^-{\sum s_P}\ar[d] & \bigoplus_{P\in\mathcal P} \ZZ\ar[d] \\
S_{\psi}^{-1}\mathrm{W}(k)\ar[r] & \bigoplus_{P\in\mathcal P}\QQ.
}
\]
Besides, the kernel and cokernel of the top homomorphism are $2$-primary torsion.
 It follows immediately that the bottom map is surjective. Let now $y$ be in the kernel of 
\[
S_{\psi}^{-1}\mathrm{W}(k)\xrightarrow{\sum s_P} \bigoplus_{P\in\mathcal P}\QQ,
\]
We may write $y=\frac xs$ with $s\in S_{\psi}$, $x\in W(k)$. One deduces:
$$\sum s_P(x)(\sum s_P(s))^{-1}=0$$
thus showing that $\sum s_P(x)=0$. It follows that $2^rx=0$ for some $r\in\NN$. Now, we have $8\vert (2n+4)(2n+2)$ in $\mathrm{W}(k)$ and it follows that for any $r\in\NN$ there exists an odd $n$ such that $2^r\vert (\psi_{2n+4}\cdot \psi_{2n}\cdot \ldots\psi_2)$. The map is thus injective.
\end{proof}

\begin{cor}
The signature and rank homomorphisms induce an isomorphism
\[
S_{\psi}^{-1}\mathrm{GW}(k)\simeq \QQ\oplus \bigoplus_{P\in\mathcal P}\QQ\simeq \mathrm{GW}(k)\otimes \QQ
\]
\end{cor}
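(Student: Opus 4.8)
The plan is to deduce the statement from the description of $S_\psi^{-1}\W(k)$ just obtained, using the two classical presentations of $\GW(k)$: the short exact sequence of abelian groups
\[
0 \longrightarrow \ZZ\cdot h \longrightarrow \GW(k) \xrightarrow{\pi} \W(k) \longrightarrow 0 ,
\]
in which $\ZZ\cdot h$ carries the $\GW(k)$-module structure coming from the rank homomorphism $\operatorname{rank}\colon\GW(k)\to\ZZ$ (indeed $\langle a\rangle h=\langle a,-a\rangle=h$, so $g\cdot h=\operatorname{rank}(g)\,h$ for every $g$), together with the cartesian square realizing $\GW(k)$ as $\ZZ\times_{\ZZ/2}\W(k)$, equivalently the exactness of $0\to\GW(k)\to\ZZ\times\W(k)\to\ZZ/2\to 0$.

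First I would apply the exact functor $S_\psi^{-1}(-)$ to the first sequence, obtaining
\[
0 \to S_\psi^{-1}(\ZZ\cdot h) \to S_\psi^{-1}\GW(k) \to S_\psi^{-1}\W(k) \to 0 .
\]
By the preceding proposition the right-hand term is $\bigoplus_{P\in\mathcal P}\QQ$. Since $S_\psi$ acts on $\ZZ\cdot h$ through $\operatorname{rank}$, the left-hand term is the localisation of $\ZZ$ at the multiplicative set generated by the integers
\[
\operatorname{rank}(\psi_{2n+4})=4(n+1)(n+2)(2n+1)(2n+3),\qquad n\ \text{odd},
\]
and this set contains, for every prime $p$ and every $N\geq 1$, a multiple of $p^N$ — take $n=2^N-1$ when $p=2$ (then $2^{N+2}\mid\operatorname{rank}(\psi_{2n+4})$) and $n=p^N-2$ when $p$ is odd (then $n$ is an odd positive integer and $p^N=n+2$) — so $S_\psi^{-1}(\ZZ\cdot h)\cong\QQ$. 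We are left with a short exact sequence $0\to\QQ\to S_\psi^{-1}\GW(k)\to\bigoplus_P\QQ\to 0$ with $\QQ$-vector space ends, so the middle term is torsion-free and divisible, hence itself a $\QQ$-vector space and the sequence splits. The localised rank map $S_\psi^{-1}\GW(k)\to S_\psi^{-1}\ZZ=\QQ$ restricts to multiplication by $2$ on $S_\psi^{-1}(\ZZ\cdot h)$ and so provides a canonical splitting; combined with the total-signature isomorphism $S_\psi^{-1}\W(k)\xrightarrow{\sim}\bigoplus_P\QQ$ this yields the first isomorphism, explicitly $g\mapsto\bigl(\operatorname{rank}(g),(s_P(g))_{P}\bigr)$.

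To identify $\QQ\oplus\bigoplus_P\QQ$ with $\GW(k)\otimes\QQ$ I would tensor the cartesian square with $\QQ$: since $(\ZZ/2)\otimes\QQ=0$ this gives $\GW(k)\otimes\QQ\cong\QQ\times(\W(k)\otimes\QQ)$, a ring in which each $\psi_{2n+4}$ is invertible, its rank being a nonzero integer and its image in $\W(k)\otimes\QQ$ being $4(n+1)(n+2)\langle -1\rangle$, a unit because $\langle -1\rangle^2=\langle 1\rangle$. Hence the canonical map $\GW(k)\to\GW(k)\otimes\QQ$ inverts $S_\psi$ and factors through $S_\psi^{-1}\GW(k)\to\GW(k)\otimes\QQ$, while $S_\psi^{-1}\GW(k)$, being a $\QQ$-algebra, receives a map from $\GW(k)\otimes\QQ$; by the universal properties of the two localisations these maps are mutually inverse.

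I expect the one genuinely non-formal point to be the elementary verification in the second paragraph that the multiplicative set generated by the $\operatorname{rank}(\psi_{2n+4})$ inverts every prime; given that, the computation of $S_\psi^{-1}(\ZZ\cdot h)$ and the splitting arguments are routine consequences of exactness of localisation and of the fact that short exact sequences of $\QQ$-vector spaces split.
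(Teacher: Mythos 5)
Your proof is correct, and it follows essentially the same strategy as the paper's: localise a short exact sequence relating $\GW(k)$ to $\ZZ$ and $\W(k)$, feed in the preceding proposition for the $\W(k)$-factor, and then argue that $S_\psi^{-1}\ZZ=\QQ$ using the ranks of the $\psi_{2n+4}$. The only genuine variation is the choice of sequence: you use the kernel sequence $0\to\ZZ\cdot h\to\GW(k)\to\W(k)\to 0$, whereas the paper uses the pullback sequence $0\to\GW(k)\to\ZZ\oplus\W(k)\to\ZZ/2\to 0$ (written in the paper with $\W(k)/2$ in place of $\ZZ/2$, which appears to be a slip; with the corrected cokernel the middle term already lands directly on $\QQ\oplus\bigoplus_P\QQ$). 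Both routes force the same key computation, namely that the multiplicative set $\{\operatorname{rank}(\psi_{2n+4})=4(n+1)(n+2)(2n+1)(2n+3): n\ \text{odd}\}$ inverts every prime; your explicit choices $n=2^N-1$ and $n=p^N-2$ make this precise where the paper only gestures at it (with an indexing slip of its own, quoting $\psi_{2m+1}$). You also supply the extra step of identifying $S_\psi^{-1}\GW(k)$ with $\GW(k)\otimes\QQ$ via the two universal properties, which the paper leaves implicit. So: same method, different but equivalent exact sequence, and a more careful execution of the arithmetic.
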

\begin{proof}
We have an exact sequence of $\GW(k)$-modules
\[
0\to \GW(k)\to \ZZ\oplus \mathrm{W}(k)\to \mathrm{W}(k)/2\to 0
\]
Localization being exact, we deduce an exact sequence of $S_{\psi}^{-1}\mathrm{GW}(k)$-modules. The above proposition shows that 
\[
S_{\psi}^{-1}\mathrm{GW}(k)\simeq S_{\psi}^{-1}\ZZ\oplus \bigoplus_{P\in\mathcal P}\QQ
\]
and we are left to prove that $S_{\psi}^{-1}\ZZ\simeq \QQ$. Let $p$ be an odd prime.
 Then, $p-2$ is odd and $(2n+4)(2n+2)=4p(p-1)$ and therefore $p$ is invertible.
 As we already know that $2$ is also invertible, the result follows.
\end{proof}

\begin{num}\label{num:symplectic_bo}
Let $S=\Spec(\ZZp)$.
 We are now in a position to classify all stable symplectic operations
 on rational Milnor-Witt motivic cohomology $\HMW \QQ_S$ of degree $(4n,2n)$, for $n \in \ZZ$.

We first consider the case $n=0$. Recall the decomposition in plus and minus part of 
Milnor-Witt motivic cohomology:
$$
\HMW \QQ_S=\HMW \QQ_{S+} \oplus \HMW \QQ_{S-}=\HM \QQ_{S} \oplus \HW_{\QQ}.
$$
We consider the additive symplectic operation
 $\tilde \chi_{2i}^M:\ZZ \times B\Sp \rightarrow K\big(\QQ_S(i),2i\big)$
 defined in Paragraph \ref{num:chi_tilda_HM}. According to Proposition \ref{prop:desuspension_add_op_HM},
 we get a stable symplectic operation
 (Definition \ref{df:stable_op})
 on rational motivic cohomology by considering the following sequence:
$$
\left(\tilde \chi^M_0,\frac 2 {4!} \tilde \chi^M_4,\hdots,\frac 2 {4n!} \tilde \chi^M_{4n},\hdots\right).
$$
Applying Corollary \ref{cor:pre_comput_stable-op}, this uniquely corresponds to a map 
 $\tilde{\bo}^+_0:\KSp_S\rightarrow \HM \QQ_S$ and we deduce a map:
$$
\tilde \bo_0:\KSp_S\xrightarrow{\tilde{\bo}^+_0} \HM \QQ_S \xrightarrow{i_+} \HMW \QQ_S
$$
where the last map is the canonical inclusion.
 For any $n \in \ZZ$, we put 
$$
\tilde \bo_{4n}:\KSp_S \simeq \KSp_S(4n)[8n] \xrightarrow{\tilde \bo_0(4n)[8n]} \HMW \QQ_S(4n)[8n].
$$
By definition, when $n>0$, it is induced as above by the following stable symplectic operation 
 on rational motivic cohomology of degree $(4n,8n)$:
\begin{equation}\label{eq:pre-borel_even}
\left(\frac 2 {4n!} \tilde \chi^M_{4n},\frac 2 {(4n+4)!} \tilde \chi^M_{4n+4},\hdots\right).
\end{equation}
For $n<0$, just add enough zeroes at the start.

Next we consider the case $n=2$.
 Let us consider the following product:
\begin{equation}\label{eq:quadratic_factorial}
\psi_{2+4n}!=\psi_2 \cdot \psi_6 \cdot \hdots \cdot \psi_{2+4n}.
\end{equation}
 Applying the previous corollary, this product of quadratic forms is invertible
 in $\HMW^{00}(S,\QQ)=\GW(S)_\QQ=\GW(\QQ)_\QQ$.
 Thus, using Theorem \ref{thm:desuspension_add_op_HMW},
 we can introduce the following stable symplectic operation on rational Milnor-Witt cohomology
 over $S$:
\begin{equation}\label{eq:pre-borel_two}
\left(\tilde \chi^\QQ_2,\frac 1 {\psi_6!} \tilde \chi^\QQ_6,\hdots,\frac 1 {\psi_{2+4n}!} \tilde \chi^\QQ_{2+4n},\hdots\right).
\end{equation}
Applying again Corollary \ref{cor:pre_comput_stable-op}, it uniquely corresponds to a morphism:
$$
\tilde \bo_2:\KSp_S \rightarrow \HMW \QQ_S(2)[4].
$$
For $n \in \ZZ$, we put 
$$
\tilde \bo_{2+4n}:\KSp_S \simeq \KSp_S(4n)[8n] \xrightarrow{\tilde \bo_2(4n)[8n]} \HMW \QQ_S(2+4n)[4+8n],
$$
which by definition is induced by the following stable symplectic operation, for $n>0$:
\begin{equation}\label{eq:pre-borel_odd}
\left(\frac 1 {\psi_{2+4n}!} \tilde \chi^\QQ_{2+4n},\frac 1 {\psi_{6+4n}!} \tilde \chi^\QQ_{6+4n},\hdots\right).
\end{equation}
Given now any $\ZZp$-scheme $S$, the operations $\tilde \bo_{2n}$ defined above can be defined over $S$
 by taking pullback along the unique morphism $S \rightarrow \Spec\left(\ZZp\right)$.
\end{num}
\begin{thm}\label{thm:stable_symplectic_op}
Let $S=\Spec\left(\ZZp\right)$ or $S=\Spec(k)$ with $k$ a perfect field of characteristic not $2$.
\begin{enumerate}
\item Let $n \in \ZZ$ be an even integer. Then one has canonical isomorphisms:
\begin{align*}
\Hom_{\mathrm{St}}\big(KSp_0,\wCH^{2n}_\QQ\big) 
 &\simeq \Hom_{\SH(S)}\left(\KSp_S,\HMW \QQ_S(2n)[4n]\right) \\
 &\simeq \Hom_{\SH(S)}\left(\KSp_S,\HM \QQ_S(2n)[4n]\right)
 \simeq \QQ
\end{align*}
where the first isomorphism is defined in Corollary \ref{cor:pre_comput_stable-op},
 and the second one is the projection on the plus part.
 Moreover, these $\QQ$-vector spaces are generated by the stable operation $\tilde \bo_{2n}$ defined above. \\
\item Let $n \in \ZZ$ be an odd integer. Then one has canonical isomorphisms:
\begin{align*}
\Hom_{\mathrm{St}}\left(KSp_0,\wCH^{2n}_\QQ\right) 
 &\simeq \Hom_{\SH(S)}\left(\KSp_S,\HMW \QQ_S(2n)[4n]\right) \\
 &\simeq \GW(S)_\QQ=\QQ \oplus \W(S)_\QQ
\end{align*}
where the first isomorphism is defined in Corollary \ref{cor:pre_comput_stable-op},
 and these $\GW(S)_\QQ$-modules are generated by the stable operation $\tilde \bo_{2n}$ defined above.
\end{enumerate}
\end{thm}
\begin{proof}
Each statement follows simply from Corollary \ref{cor:pre_comput_stable-op}
 and the computation of the projective system \eqref{eq:pre_comput_stable-op},
 whose transition maps are given by Theorem \ref{thm:desuspension_add_op_HMW}
 (note that by $(8,4)$-periodicity of $\KSp_S$,
 one can reduce to the case $n \geq 0$).
 By construction, we have for any symplectic bundle $\cU$ over $S$
equalities $\tilde \bo_0(\cU)=\tilde \chi_0^\QQ(\cU)=\rk(\cU)$
 and $\tilde \bo_2(\cU)=\tilde \chi_2^\QQ(\cU)=b_1(\cU)$ -- apply \eqref{eq:determinant_tilde_chi}.
 This implies that the operations $\tilde \bo_{2n}$ are non-zero.
\end{proof}

\begin{num}
Assume $S=\Spec(\ZZp)$ (or $\Spec(k)$).
Recall from \cite[6.2.3.9]{Riou} that for any $n \geq 0$, 
 one has:
$$
\Hom_{\SH(S)}(\KGL,\HM \QQ_S(n)[2n]) \simeq \QQ
$$
 is generated by the $n$-th component of the Chern character map
 $\ch_n:\KGL \rightarrow \HM \QQ_S$ (see footnote \ref{fn:RiouZ} page \pageref{fn:RiouZ}
 for the case $S=\Spec(\ZZp)$).
 Using the notations of Paragraph \ref{num:chi_tilda_HM},
 the map $\ch_n$ can be viewed as an $H$-stable operation as:
\begin{equation}\label{eq:Borel_operations}
\left(\frac 1 {n!}.\chi_n,\frac 1 {(n+4)!}.\chi_{n+4},\hdots\right).
\end{equation}
\end{num}
\begin{prop}\label{prop:compare_borel&chern}
Under the assumptions of the previous theorem, the following assertions hold.
\begin{enumerate}
\item For any even integer $n$, the following diagram is commutative:
$$
\xymatrix@C=24pt@R=14pt{
\KSp\ar^-{\tilde \bo_{2n}}[r]\ar_f[d] & \HMW \QQ_S(2n)[4n] \\
\KGL\ar^-{\ch_{2n}}[r] & \HM \QQ_S(2n)[4n]\ar_{i_+}[u]
}
$$
where $f$ is the forgetful map and $i_+$ the inclusion of the plus-part.
\item For any integer $n$, the following diagram is commutative:
$$
\xymatrix@C=24pt@R=14pt{
\KSp\ar^-{\tilde \bo_{2n}}[r]\ar_f[d] & \HMW \QQ_S(2n)[4n]\ar^{p_+}[d] \\
\KGL\ar^-{\ch_{2n}}[r] & \HM \QQ_S(2n)[4n]
}
$$
where $f$ is the forgetful map and $p_+$ the projection onto plus-part.
\end{enumerate}
\end{prop}
\begin{proof}
The first point follows directly from comparing formulas 
 \eqref{eq:pre-borel_even} and \eqref{eq:Borel_operations}
 using Lemma \ref{lm:compare_operations_BGL}.
Consider the second point. The case $n$ even is implied by the first point.
 The case $n$ odd reduces to $n>0$ (in fact $n=1$ is enough).
 Then one can compare formulas \eqref{eq:pre-borel_odd} and \eqref{eq:Borel_operations}
 using Lemma \ref{lm:compare_operations_BGL} and the fact: $\rk(\psi_{2n}!)=(2n)!/2$
 (use Formula \eqref{eq:denomin_Borel_char})
\end{proof}

\section{The Borel character}\label{sec:Borel}


\subsection{Definition and main theorem}

\begin{num}
We rephrase in the next statement the main theorem of the previous section,
 Theorem \ref{thm:stable_symplectic_op},
 in terms of higher Grothendieck-Witt groups.
 Let $S$ be a $\ZZp$-scheme $S$ and $n \in \ZZ$ be an integer.
 Using the isomorphism \eqref{eq:KSp&GW} and the notations of Paragraph \ref{num:symplectic_bo},
 we introduce the following maps. When $n$ is even,
\begin{align*}
\bo_{2n}:&\KQ_S \simeq \KSp_S(-2)[-4] \\
 &\xrightarrow{\tilde \bo_{2+2n}(-2)[-4]} \HMW \QQ_S(2+2n)[4+4n](-2)[-4]
 \simeq \HMW \QQ_S(2n)[4n]
\end{align*}
and when $n$ is odd:
\begin{align*}
\bo_{2n}:&\KQ_S \simeq \KSp_S(-2)[-4] \\
 &\xrightarrow{p_+ \circ \bo_{2+2n}(-2)[-4]} \HM \QQ_S(2+2n)[4+4n](-2)[-4]
 \simeq \HM \QQ_S(2n)[4n]
\end{align*}
Note that is follows from the construction of the stable $\Sp$-operation $\tilde \bo$ 
 (see \emph{loc. cit.}) that for any integer $n \in \ZZ$, one has:
\begin{equation}\label{eq:bo&suspension}
\begin{split}
\bo_{4n}:&\KQ_S \simeq \KQ_S(4n)[8n] \xrightarrow{\bo_0(4n)[8n]} \HMW \QQ_S(4n)[8n], \\
\bo_{2+4n}:&\KQ_S \simeq \KQ_S(4n)[8n] \xrightarrow{\bo_2(4n)[8n]} \HMW \QQ_S(2+4n)[4+4n],
\end{split}
\end{equation}
where the first isomorphism in each line is obtained by the periodicity of
 hermitian K-theory.\footnote{Which is given by the cup-product with the element $\kappa_S$ 
 recalled in Example \ref{ex:periodicity}.}
\end{num}
\begin{prop}\label{prop:stable_maps_KQ_HMW}
Consider the assumptions of the previous theorem.
 Then for any integer $n \in \ZZ$, one has:
$$
\Hom_{\SH(S)}\left(\KQ_S,\HMW \QQ_S(2n)[4n]\right)=
\begin{cases}
\QQ.(i_+ \bo_{2n}) & |n| \text{ odd}, \\
\GW(S)_\QQ.\bo_{2n} & |n| \text{ even}.
\end{cases}
$$
In the odd case, any map $\KQ_S \rightarrow \HMW \QQ_S(2n)[4n]$
 is zero on the minus part.
 The map $\bo_0:\KQ_S \rightarrow \HMW \QQ_S$
 is the unique map such that $bo_0(1_{GW})=1_{\HMW}$.
\end{prop}
\begin{proof}
Everything except the statement about $\bo_0$ follows from Theorem \ref{thm:stable_symplectic_op}.
 By definition of $\bo_0$, we get the following commutative diagram:
$$
\xymatrix@R=10pt@C=20pt{
\GW_0(S)\ar^{\bo_{0*}}[r]\ar_{(1)}[d] & \wCH^0(S) \\
\tilde \KSp(\HP^1_S)\ar@{^(->}[d] & \\
\KSp(\HP^1_S)\ar^{\tilde \bo_{2*}}[r] & \wCH^2(\HP^1_S)\ar_{(2)}[uu]
}
$$
where $(1)$ is the exterior product with the tautological symplectic bundle $\cH$ on $\HP^1$,
 and (2) is the projection on the factor $\wCH^0(S).b_1(\cH)$, using the symplectic projective
 bundle theorem for Chow-Witt groups (here $b_1$ denotes the Borel class for Chow-Witt groups).
 Thus the statement simply follows from the fact $\tilde \bo_{2*}=\tilde \chi_2=b_1$
 --- formula \eqref{eq:pre-borel_two} (resp. \eqref{eq:determinant_tilde_chi})
 for the first (resp. second) equality.
\end{proof}

\begin{df}\label{df:Borel_char}
Let $S$ be a $\ZZp$-scheme.
 We define the \emph{Borel character} over $S$ as the map:
\begin{align*}
\bo_t:\KQ^\QQ_{S} \xrightarrow{\ (\bo_{2n})_{n \in \ZZ}\ }
 & \bigoplus_{n \text{ even}} \HMW \QQ_S(2n)[4n] \oplus \bigoplus_{n \text{ odd}} \HM \QQ_S(2n)[4n] \\
 & \!\!\!\simeq \bigoplus_{n \in \ZZ} \SSp_{S,\QQ+}(2n)[4n] \oplus \bigoplus_{n \in \ZZ} \SSp_{S,\QQ-}(4n)[8n]
\end{align*}
\end{df}
Note that the map $(bo_{2n})_{n \in \ZZ}$ lands into a direct sum rather than a product
 because the corresponding product is in fact isomorphic to the above direct sum. This follows
 as $\SH(\ZZp)$ is compactly generated by objects of the form $\Sigma^\infty X_+(q)[p]$
 for $X$ smooth over $\ZZp$
 and from the fact that $\HMW^{4n-p,2n-q}(X,\QQ)$ vanishes for $n>>0$ or $n<<0$.
 The second isomorphism follows from the identifications \eqref{eq:MW&M_plusminus}.

\begin{num}\label{num:Borel_ppties}
Note that, according to the properties of $\tilde \bo_{2n}$, $\bo_t$ is compatible with
 pullbacks: it is a morphism of $\ZZp$-absolute spectra.
Using Proposition \ref{prop:compare_borel&chern}, we immediately
 get the commutativity of the following square:
$$
\xymatrix@C=24pt@R=14pt{
\KQ^\QQ_{S}\ar^-{\bo_t}[r]\ar_f[d] & \bigoplus_{n \text{ even}} \HMW \QQ_S(2n)[4n] \oplus \bigoplus_{n \text{ odd}} \HM \QQ_S(2n)[4n]\ar[d] \\
\KGL^\QQ_{S}\ar^-{\ch_t}[r] & \bigoplus_{m \in \ZZ} \HM \QQ_S(m)[2m]
}
$$
where $f$ is the natural forgetful map, and the right-hand vertical map
 is obtained by the projection on the plus part for the even integer $m$,
 $0$ for $m$ odd. In particular, for odd $n$, the map $\bo_{2n}=\ch_{2n} \circ f$
 (apply point (1) of Proposition \ref{prop:compare_borel&chern}).
\end{num}

In the remaining of this section, we will prove that
 for any $\ZZp$-scheme $S$,
 the Borel character $bo_t$ is an isomorphism of ring spectra:
 see Theorem \ref{thm:Borel_iso}.

\subsection{Principle of proof and periodic spectra}\label{sec:periodic}

\begin{num}\textit{Principle of proof of Th. \ref{thm:Borel_iso}}.
To prove that the Borel character $\bo_t$ is an isomorphism over any scheme $S$,
 by compatibility with pullbacks (see Paragraph \ref{num:Borel_ppties}),
 it is sufficient to consider the case $S=\Spec(\ZZp)$.
 Moreover we can always use Morel's decomposition of $\SH(S)_\QQ$ into its plus and minus parts.

That being said, we will prove that $\bo_t$ is an isomorphism by
 constructing an explicit inverse $\bo'_t$ using the theory of periodic ring spectra (see definition below).
 The advantage of this construction is that $\bo'_t$ will clearly be a morphism of ring spectra.
 We will construct $\bo'_t$ by separating the plus part (section) and the minus part (section ).
\end{num}

The following result is classical (see e.g. \cite[Prop. 2.6]{GepSna}) in (motivic) homotopy theory:
\begin{prop}
Let $\E$ be a motivic ring spectrum over $S$.
 Consider a pair of integers $(n,i) \in \ZZ^2$.
 Then the following conditions are equivalent:
\begin{enumerate}
\item[(i)] There exists an element $\rho \in \E_{n,i}(S)$, invertible for the cup-product
 on $\E^{**}$.
\item[(ii)] There exists an isomorphism: $\tilde \rho:\E(i)[n] \rightarrow \E$.
\end{enumerate}
\end{prop}

\begin{df}
A pair $(\E,\rho)$ satisfying the equivalent conditions of the above proposition will be called an \emph{$(n,i)$-periodic ring spectrum} over $S$.

 We obviously get a $\Sch_B$-fibered category of periodic ring spectra.
 A \emph{$B$-absolute $(n,i)$-periodic ring spectrum $(\E_,\rho)$} is a section of this $\Sch_B$-fibered category.\footnote{\emph{i.e.}
 a collection of periodic $(n,i$)-spectra $(\E_S,\rho_S)$ for schemes $S$ in $\Sch_B$
 such that for any morphism $f:T \rightarrow S$, $f^*(\rho_S)$ corresponds to $\rho_T$
 via the given isomorphism $f^*(\E_S) \simeq \E_T$.}
\end{df}
Given such a periodic absolute ring spectrum, we get a universal morphism of absolute ring spectra:
\begin{equation}\label{eq:periodization_map}
\sigma_\rho:\bigoplus_{r \in \ZZ} \SSp_S(ri)[rn] \xrightarrow{\sum_r \rho^r} \E_S
\end{equation}
with source the $(n,i)$-periodization of the sphere spectrum.

\begin{ex}\label{ex:periodicity}
\begin{enumerate}
\item The $K$-theory spectrum $\KGL$ together with the Bott element $\beta$ is $(2,1)$-periodic,
 as an absolute ring spectrum over $\ZZ$. Note for normalization purposes that we consider
 $\beta_\ZZ$ as the element of $\KGL_{2,1}(\ZZ)$ uniquely defined by the following property:
$$
\xymatrix@=10pt
{
\KGL_{2,1}(\ZZ)\ar@{=}[r] & \lbrack\un_\ZZ(1)[2],\KGL_\ZZ\rbrack^{st}\ar@{}|/6pt/\subset[r] & \lbrack\PP^1_\ZZ,\KGL_\ZZ\rbrack^{st}\ar^-\sim[r] &K_0(\PP^1_\ZZ) \\
 \beta_\ZZ\ar@{|->}[rrr] &&& [\mathcal O(-1)]-[\mathcal O]
}
$$
where $\mathcal O(-1)$ is the tautological line bundle on $\PP^1_S$.

Note that it follows from the relation $\ch_n(\beta^i)=\delta_{ni}$ that the rational and plus part of the periodization map:
$$
\sigma_\beta^{\QQ+}:\bigoplus_{n \in \ZZ} \HM\QQ_S(n)[2n] \rightarrow \KGL_{S,\QQ}
$$
is an isomorphism of ring spectra with inverse the Chern character map:
$$
\ch_t:\KGL_S \xrightarrow{(\ch_n)} \bigoplus_{n \in \ZZ} \HM\QQ_S(n)[2n].
$$
\item By construction (see \cite{PW1}), the hermitian K-theory spectrum $\KQ$ is $(8,4)$-periodic, as an absolute ring spectrum over $\ZZp$.
 We will consider the periodicity element $\kappa \in \KQ_{8,4}(\ZZp)$ characterized by the property:\footnote{While $\beta$ refers
 for Bott periodicity, the choice of letter $\kappa$ refers to Karoubi theorem which implies the periodicity of the spectrum $\KQ$
 (see \cite[Th. 6.2]{Schlicht2}).}
$$
\xymatrix@=10pt
{
\KQ_{8,4}(\ZZp)\ar@{=}[r] & \lbrack\un(4)[8],\KQ\rbrack^{st}\ar@{}|/9pt/\subset[r] & \GW^{0,0}(P)\ar^-\sim[r] &KO_0(P) \\
 \kappa_\ZZ\ar@{|->}[rrr] &&& [\cU \otimes \cU]-[\mathcal O].
}
$$
where $P=\HP^1_{\ZZp} \times \HP^1_{\ZZp}$, $\cU$ is the tautological symplectic bundle on $\HP^1_{\ZZp}$ (see Paragraph \ref{num:sp_pb_thm}),
 and $\cU \otimes \cU$ is the external product, seen as a quadratic bundle.
 Explicitly, $\kappa=(\cU_1-\cH)\otimes (\cU_2-\cH)$.

It follows from our conventions that for any $\ZZp$-scheme $S$,
  the forgetful map $f:\KQ_S \rightarrow \KGL_S$ sends $\kappa_S$ to $\beta_S^4$.
\item The $\ZZ[1/2]$-absolute ring spectrum $\KW$, representing Balmer's higher Witt groups, together with the Hopf map $\eta$
 is $(1,1)$-periodic (see Section \ref{sec:minus}).
\end{enumerate}
\end{ex}

\subsection{The plus part}

\begin{num}
Given an arbitrary $\ZZp$-scheme $S$,
 it follows from \cite[Th. 6.1]{Schlicht2} and \cite[Th. 3.4]{RonOst} that one has a canonical distinguished triangle:
$$
\KQ_S(1)[1] \xrightarrow{\eta} \KQ_S \xrightarrow f \KGL_S \xrightarrow{h \circ \gamma_{\beta'}} \KQ_S(1)[2]
$$
where $\eta$ is the Hopf map, $f$ the forgetful map,
 $h:\KGL_S \rightarrow \KQ_S$ is the "hyperbolization" map
 and $\gamma_{\beta'}$ is the multiplication by the inverse of the Bott element $\beta$
 (Example \ref{ex:periodicity}).
 As $\eta_+=0$ and $\KGL_{S-}=0$, we immediately deduce the following result.
\end{num}
\begin{lm}
The following exact sequence is split exact in $\SH(S)_+$:
$$
0 \rightarrow \KQ_{S+}
 \xrightarrow f \KGL_S[2^{-1}] \xrightarrow{h \circ \gamma_{\beta'}} \KQ_{S+}(1)[2] \rightarrow 0.
$$
In other words, $\KGL_S[2^{-1}] \simeq \KQ_{S+} \oplus \KQ_{S+}(1)[2]$.
\end{lm}
There is moreover a canonical splitting of the above exact sequence. Indeed, according to \cite[3.6]{RonOst},
 one gets the relation in $\End_{\SH(S)_+}(\KQ_{S+})$:
\begin{equation}\label{eq:forgetful&splitting}
h \circ f=1-\epsilon_+=2
\end{equation}
as, by design, $\epsilon_+=-1$.

\begin{num}
By construction of the hermitian K-theory spectrum and the forgetful map $f$ (see \cite[Prop. 3.3]{RonOst}),
 we get the following commutative diagram:
$$
\xymatrix@R=12pt@C=24pt{
\KQ_{4,2}(S)\ar_f[d] & \KSp_0(S)\ar^f[d]\ar^\phi_-\sim[l] \\
\KGL_{4,2}(S) & \KGL_0(S),\ar_-{\gamma_{\beta^2}}[l]
}
$$
where $\gamma_{\beta^2}$ is multiplication by $\beta^2$, and $f$ on the right-hand side
 is the map forgetting the hermitian structure. We these notation,
 we can define the following element in the positive part of hermitian K-theory:
$$
\rho_S=\frac 12.\phi([\cH]) \in \KQ^+_{4,2}(S).
$$
This elements is stable under pullback so we can erase the base $S$ to simplify notation.
It follows from the above commutative square that $f(\rho)=\beta^2$.

The same construction can be done replacing in degree $(-4,-2)$ by replacing $\beta$ with $\beta^{-1}$.
 Thus we get an element $\rho' \in \KQ^+_{-4,-2}(S)$ such that $f(\rho')=\beta^{-2}$.

Let us remark that the forgetful map $f:\KQ_S \rightarrow \KGL_S$ is a morphism of ring spectra.
 This implies that $f(\rho.\rho')=1$. As $f$ is a monomorphism on the plus-part,
 due to \eqref{eq:forgetful&splitting}, we deduce that $\rho$ is invertible.
 Thus $(\KQ_+,\rho)$ is a periodic absolute $(4,2)$-spectrum over $\ZZp$. In particular we
 get a canonical map by taking the plus part of \eqref{eq:periodization_map} :
$$
\bigoplus_{n \in \ZZ} \SSp_{S,+}(2n)[4n] \xrightarrow{\sum_n \rho^n} \KQ_{S+}.
$$

Note moreover that we get the following relations for any integer $n \in \ZZ$:
\begin{equation}\label{eq:rho&beta}
f(\rho^n)=\beta^{2n}, \quad 
h(\beta^{n})=\begin{cases}
2.\rho^i & n=2i, \\
0 & n \text{ odd}.
\end{cases}
\end{equation}
The first relation follows as $f$ is a morphism of ring spectra,
 and the second one from the first and the fact that $h \circ \gamma_{\beta'} \circ f=0$
 (the above lemma). We immediately deduce from these relations the following lemma.
\end{num}
\begin{lm}
The following diagram is commutative in $\SH(S)_+$:
$$
\xymatrix{
\KQ_{S+}\ar^f[r] & \KGL_S[1/2]\ar^{h \circ \gamma_{\beta'}}[r] & \KQ_{S+}\dtw 1 \\
\bigoplus_{n \in \ZZ} \SSp_{S+}\dtw{2n}\ar^-{\sigma_\rho}[u]\ar@{^(->}[r] & \bigoplus_{m \in \ZZ} \SSp_{S+}\dtw m\ar@{->>}[r]\ar|/-2pt/{\sigma_\beta}[u]
 & \bigoplus_{n \in \ZZ} \SSp_{S+}\dtw{2n+1}.\ar_-{\sigma_\rho\dtw 1}[u]
}
$$
\end{lm}

As an application, we get the following result which conclude the "plus-part" of Theorem \ref{thm:Borel_iso}.
\begin{prop}\label{prop:Borel_plus}
Consider the above notation.
 The morphism of rational ring spectra
$$
\sigma_\rho:\bigoplus_{n \in \ZZ} \SSp_{S,\QQ+}(2n)[4n] \rightarrow \KQ_{S,\QQ+}
$$
is an isomorphism, and the following relation hold:
$$
\bo_{t+} \circ \sigma_\rho=1.
$$
\end{prop}
The first assertion follows from the preceding lemma and point (1) of Example~\ref{ex:periodicity}.
 The second assertion follows easily from relation \eqref{eq:rho&beta},
 the commutativity of the square in Paragraph~\ref{num:Borel_ppties}
 and again Example~\ref{ex:periodicity}(1).

\subsection{The minus part}\label{sec:minus}

\begin{num}
Let $S$ be a $\ZZp$-scheme.
According to our conventions, there exists a natural map
\[
\KQ_S\to \KW_S
\] 
which induces an isomorphism:
$$
\KQ_{S,-}\rightarrow \KW_S[1/2].
$$
In particular, $\KW_S[1/2]$ is $(1,1)$-periodic with respect to $\eta$
 and $(8,4)$-periodic with respect to $\kappa_- \in \KW_{8,4}$ which is the image
 of $\kappa$ under the canonical projection :$\KQ_S \rightarrow \KQ_{S-}$.
 The next result generalizes the fundamental result of \cite{ALP}
 to an arbitrary base $\ZZp$-scheme.
\end{num}
\begin{prop}\label{prop:Borel_minus1}
The canonical map associated to $\kappa_- \in \KW_{8,4}(S)$ as in \eqref{eq:periodization_map}:
$$
\sigma_{\kappa_-}:\bigoplus_{n \in \ZZ} \SSp_{Q-}(4n)[8n] \rightarrow \KW_S[1/2]
$$
is an isomorphism of ring spectra.
\end{prop}
\begin{proof}
The map $\sigma_{\kappa_-}$ is compatible with base change.
 Recall from \cite[4.3.17]{CD3} that the family of functors $x^*$ for $x \in S$ a point of $X$
 is conservative. Thus we are reduced to the case where $S$ is the spectrum of a field
 (of characteristic different from $2$).
 Then the result can be reduced to \cite[Corollary~3.5]{ALP}.
 In fact, our isomorphism is the inverse of that of \emph{loc. cit.}.
 Indeed, the stable operations $\rho_m^{st}$ that compose the latter
 are defined by the relation: $\rho_m^{st}(\kappa_-^n)=\delta_{nm}.\kappa_-^n$;
 see \cite[Definition~2.5]{ALP}, given that the element $\beta$ in \emph{loc. cit.}
 is our element $\kappa_-$.
\end{proof}

Here is the last result needed to conclude the proof of Theorem~\ref{thm:Borel_iso}.
\begin{prop}\label{prop:Borel_minus2}
For any integers $n,i \in \ZZ$, the following relation holds
 in $\HMW^{8(n-i),4(n-i)}(S)$:
$$
\bo_{4n-}(\kappa^i_-)=\delta_n^i.
$$
\end{prop}
\begin{proof}
Note first that by compatibility with pullbacks, it is sufficient
 to treat the case where $S=\Spec(\ZZp)$.
 According to the first of the relations \eqref{eq:bo&suspension},
 it is sufficient to treat the case $n=0$.
 The case $i=0$ follows from the last assertion of Proposition \ref{prop:stable_maps_KQ_HMW}.
 The vanishing for $n=0$, $i\neq 0$ follows as MW-motivic cohomology of $\ZZp$ vanishes
 in degree $(8r,4r)$ for $r=(n-i)\neq 0$, as recalled in Paragraph \ref{num:HMW_vanishing}.
\end{proof}

\subsection{Conclusion}

Putting together Propositions \ref{prop:Borel_plus}, \ref{prop:Borel_minus1} and \ref{prop:Borel_minus2},
 we get the main theorem of this section:
\begin{thm}\label{thm:Borel_iso}
Let $S$ be an arbitrary $\ZZp$-scheme.
 Then the Borel character $\bo_t$ (Definition \ref{df:Borel_char}) is an isomorphism of ring spectra with reciprocal
 isomorphism:
$$
\sigma_\rho+\sigma_{\kappa_-}:\bigoplus_{n \in \ZZ} \SSp_{S,\QQ+}(2n)[4n] \oplus \bigoplus_{n \in \ZZ} \SSp_{S,\QQ-}(4n)[8n]
 \rightarrow \KQ_{S,\QQ}.
$$
\end{thm}
Note by the way that, taking into account the relation $\rho^2=\kappa_+$, and the identifications \eqref{eq:MW&M_plusminus},
 one can rewrite the preceding isomorphism as:
$$
\sigma_\kappa+\sigma_{\kappa_+}.\rho:\bigoplus_{n \in \ZZ} \HMW \QQ_{S}(4n)[8n] \oplus \bigoplus_{n \in \ZZ} \HM \QQ_S(4n+2)[8n+4]
 \rightarrow \KQ_{S,\QQ}.
$$

\begin{rem}
As indicated to us by F. Jin,
 it can be noted that the reciprocal isomorphism of the Borel character exists integrally.
 This is not the case of the Borel character, as for the Chern character.
\end{rem}

\section{Applications}\label{sec:applications}


\subsection{A Suslin-type homomorphism}\label{sec:suslinhomomorphism}

In this section $k$ is a perfect field of characteristic different from $2$. We begin with the construction of a slightly different model of the spectrum $\KQ$. With this in mind, we can use \cite[Theorem 1.3]{ST} (in the spirit of \cite[\S 2.2]{Asok14b}) to observe that the Orthogonal Grassmannian $OGr$ constructed in \emph{loc. cit.} admits explicit deloopings $\Omega_{\PP^1}^{-n}(\ZZ\times OGr)$ for $n\in\NN$ satisfying $[\Sigma_{S^1}^i(X_+),\Omega_{\PP^1}^{-n}(\ZZ\times OGr)]_{\AA^1}\simeq \GW^n_i(X)$ for any smooth scheme $X$. In particular, $\Omega_{\PP^1}^{-2}(\ZZ\times OGr)\simeq \ZZ\times HGr$. This allows to define a spectrum whose term in degree $n$ is $\Omega_{\PP^1}^{-n}(\ZZ\times OGr)$ and whose bonding maps are the adjoints of the equivalences
\[
\Omega_{\PP^1}^{-n}(\ZZ\times OGr)\simeq \Omega_{\PP^1}\Omega_{\PP^1}^{-n-1}(\ZZ\times OGr).
\]
We still denote this spectrum by $\KQ$, as it is canonically isomorphic to the one we considered before. The unit map $\varepsilon:\un\to \KQ$ of this spectrum was explicitly constructed in \cite{Asok14b}, yielding explicit morphisms
\[
\varepsilon_n:(\PP^1)^{\wedge n}\to \Omega_{\PP^1}^{-n}(\ZZ\times OGr)
\]
for each $n\in \NN$. For an integer $n\in\NN$, we define $\mu(n)\in \NN$ to be the smallest integer of the form $4m+2$ greater than or equal to $n$. Note in particular that $\mu(4n+2)=4n+2$ for any $n\in\NN$. We can define an operation
\[
\tilde\chi_{n}:\Omega_{\PP^1}^{-n}(\ZZ\times OGr)\to K\big(\tilde\ZZ(n),2n\big)
\]
for any $n\in\NN$
using the commutative diagram
\[
\xymatrix{\Omega_{\PP^1}^{-n}(\ZZ\times OGr)\ar[r]\ar@{-->}[d]_-{\tilde\chi_{n}} & \Omega_{\PP^1}^{\mu(n)-n}\Omega_{\PP^1}^{-\mu(n)}(\ZZ\times OGr)\ar[d]^{\Omega_{\PP^1}^{\mu(n)-n}\tilde\chi_{\mu(n)}} \\
K\big(\tilde\ZZ(n),2n\big)\ar[r] &  \Omega_{\PP^1}^{\mu(n)-n}K\big(\tilde\ZZ(\mu(n)),2\mu(n)\big)}
\]
in which the vertical maps are isomorphisms. Altogether, we get a composite 
\[
(\PP^1)^{\wedge n}\stackrel{\varepsilon_n}\to \Omega_{\PP^1}^{-n}(\ZZ\times OGr)\stackrel{\tilde\chi_{n}}\to K\big(\tilde\ZZ(n),2n\big)
\]
which induces homomorphisms of sheaves 
\[
\piaone_i((\PP^1)^{\wedge n})\to \piaone_i(\Omega_{\PP^1}^{-n}(\ZZ\times OGr))\to \piaone_i(K\big(\tilde\ZZ(n),2n\big))
\]
for each $i\in\NN$. As $[\Sigma_{S^1}^i(X_+),\Omega_{\PP^1}^{-n}(\ZZ\times OGr)]_{\AA^1}\simeq \GW_i^n(X)$ for any smooth scheme $X$, we see that $\piaone_i(\Omega_{\PP^1}^{-n}(\ZZ\times OGr))$ is the sheaf associated to the presheaf $\GW_i^n$, while $\piaone_i(K\big(\tilde\ZZ(n),2n\big))$ is the sheaf associated to the presheaf $X\mapsto \mathrm{H}^{2n-i}_{\mathrm{MW}}(X,\ZZ)$. Setting $i=n$ above and considering sections on a (finitely generated) field extension $L/k$, we then obtain a string of homomorphisms
\[
\piaone_i((\PP^1)^{\wedge n})(L)\xrightarrow{\varepsilon_{n,n}} \GW_n^n(L)\xrightarrow{\tilde\chi_{n,n}} \mathrm{H}_{\mathrm{MW}}^{n,n}(L,\ZZ)
\]
Suppose that $n\geq 2$. In light of \cite[Corollary 6.43]{MorLNM} and \cite[Theorem 4.2.3]{Deglise16}, we finally obtain homomorphisms 
\[
\KMW_n(L)\xrightarrow{\varepsilon_{n,n}} \GW_n^n(L)\xrightarrow{\tilde\chi_{n,n}}\KMW_n(L).
\]
We note that the first map coincides (up to a unit) with the map $\KMW_{4n+2}(L)\to \GW_{4n+2}^2(L)$ induced by the identity $\KMW_1(L)=\GW^1_1(L)$ and the ring structure on both sides (\cite[Theorem 4.2.2]{Asok14b}). 

\begin{thm}\label{thm:strongsuslin}
For any $n\geq 2$ and any finitely generated field extension of $k$, the composite
\[
\KMW_n(L)\stackrel{\varepsilon_{n,n}}\to \GW_n^n(L)\stackrel{\tilde\chi_{n,n}}\to\KMW_n(L)
\]
is multiplication by $\psi_{\mu(n)}!\in \GW(k)$
 according to formula \eqref{eq:quadratic_factorial},
 where $\mu(n)$ is the smallest integer congruent to 2 modulo 4 and greater than $n$.
\end{thm}

\begin{proof}
We have adjunctions
\[
\mathcal H_{\AA^1}(k) \leftrightarrows \mathrm{D}_{\AA^1}^{\mathrm{eff}}(k)  \leftrightarrows \widetilde{\mathrm{DM}}^{\mathrm{eff}}(k),
\]
the first one being the classical Dold-Kan correspondence and the second one being the adjunction of \cite[\S 3.2.4]{Deglise16}. We can thus consider the resulting adjunction 
\[
\mathcal H_{\AA^1}(k)\leftrightarrows \widetilde{\mathrm{DM}}^{\mathrm{eff}}(k).
\]
The image of the object $(\PP^1)^{\wedge n}$ of $\mathcal H_{\AA^1}(k)$ is precisely $\tilde\ZZ(n)[2n]$ and the unit of the adjunction induces a map
\[
\eta_n:(\PP^1)^{\wedge n}\to K\big(\tilde\ZZ(n),2n\big)
\]
which is in fact the degree $n$ morphism of the unit map of the spectrum $\HMW$. It is an explicit generator of $[(\PP^1)^{\wedge n},K\big(\tilde\ZZ(n),2n\big)]\simeq \GW(k)$ and we claim that it induces an isomorphism upon applying $\piaone_n$. In view of the Hurewicz homomorphism in $\AA^1$-homotopy theory and \cite[Theorem 6.37]{MorLNM}, we are reduced to show that the unit map 
\[
C^{\AA^1}_*((\PP^1)^{\wedge n})\to \tilde\ZZ(n)[2n]
\]
in the adjunction $\mathrm{D}_{\AA^1}^{\mathrm{eff}}(k)  \leftrightarrows \widetilde{\mathrm{DM}}^{\mathrm{eff}}(k)$ induces an isomorphism upon applying $\mathrm{H}_{n}$. This is tantamount to showing that the unit map
\[
C^{\AA^1}_*((\mathbb{G}_m)^{\wedge n})\to  \tilde\ZZ(n)[n]
\]
induces an isomorphism on $\mathrm{H}_0$, which is the case by \cite[Theorem 4.2.3]{Deglise16}.

Now, we have a commutative diagram
\[
\xymatrix{(\PP^1)^{\wedge 2}\ar[r]\ar[d]_{\varepsilon_2} & \Omega_{\PP^1}^{n-2} (\PP^1)^{\wedge n} \ar[d]^-{\Omega_{\PP^1}^{n-2}\varepsilon_{n}}\ar[r] &\Omega_{\PP^1}^{\wedge \mu(n)-2} (\PP^1)^{\mu(n)} \ar[d]^-{\Omega_{\PP^1}^{\mu(n)-2}\varepsilon_{\mu(n)}}\\
\ZZ\times HGr\ar[r]\ar[d]_-{\psi_{\mu(n)}!\tilde\chi_2} &  \Omega_{\PP^1}^{n-2}\Omega_{\PP^1}^{-n}(\ZZ\times OGr)\ar[d]^-{\Omega_{\PP^1}^{n-2}\tilde\chi_{n}} \ar[r] &  \Omega_{\PP^1}^{\mu(n)-2}\Omega_{\PP^1}^{-\mu(n)}(\ZZ\times Ogr)\ar[d]^-{\Omega_{\PP^1}^{\mu(n)-2}\tilde\chi_{\mu(n)}}\\
K\big(\tilde\ZZ(2),4\big)\ar[r] & \Omega_{\PP^1}^{n-2}K\big(\tilde\ZZ(n),2n\big)\ar[r] &  \Omega_{\PP^1}^{\mu(n)-2}K\big(\tilde\ZZ(\mu(n)),2\mu(n)\big)} 
\]
by (a repeated use of) Theorem \ref{thm:desuspension_add_op_HMW} and $\tilde\chi_2\circ\varepsilon_2$ corresponds to $b_1(\cU)$, which is in fact $\eta_2$. Thus $\tilde\chi_{n}\circ\varepsilon_{n}$ corresponds to $\psi_{\mu(n)}!\eta_2$ and consequently we have 
\[
\tilde\chi_{n}\circ\varepsilon_{n}=\psi_{\mu(n)}!\eta_n
\]
The right-hand side is in fact the composite 
\[
(\PP^1)^{\wedge n}\xrightarrow{\psi_{\mu(n)}!} (\PP^1)^{\wedge n}\xrightarrow{\eta_n} K\big(\tilde\ZZ(\mu(n)),2\mu(n)\big)
\]
where the first map (still denoted by $\psi_{\mu(n)}!$) is obtained via the isomorphism $[(\PP^1)^{\wedge n},(\PP^1)^{\wedge n}]_{\AA^1}\simeq \GW(k)$ of \cite[Corollary 6.43]{MorLNM}. 
\end{proof}

\begin{rem}\label{rem:suslin}
Let $\varepsilon^{\GL}:\un\to \KGL$ be the unit map of the spectrum representing $K$-theory. Using the operations $\chi$ from $K$-theory to motivic cohomology, one can repeat the above construction to get homomorphisms
\[
\KMW_n(L)\stackrel{\psi_{n,n}}\to \K_n(L)\stackrel{\tilde\chi_{n,n}}\to\KM_n(L)
\]
The first map factors through Milnor $K$-theory and we get a commutative diagram
\[
\xymatrix@C=4em{\KMW_{n}(L)\ar[r]^-{\varepsilon_{n,n}}\ar[d] &  \GW_{n}^n(L)\ar[r]^-{\tilde\chi_{n,n}}\ar[d]  &   \KMW_{n}(L)\ar[d]  \\
\KM_{n}(L)\ar[r]_-{\varepsilon^{\GL}_{n,n}} &  \K_{n}(L)\ar[r]_-{\chi_{n,n}} &   \KM_{n}(L) } 
\]
We note that the composite is then equal to $\mu(n)!$, showing that the homomorphism $\K_{n}(L)\to  \KM_{n}(L)$ is not optimal, in comparison with the map $\K_{n}(L)\to  \KM_{n}(L)$ defined by Suslin in \cite{Suslin84}. If $n=\mu(n)$, we guess that the above homomorphism coincides with the one defined by Suslin, up to a sign. 
\end{rem}

\begin{rem}
The homomorphism $\psi_{n,n}:\KMW_n(L)\to \GW_{n}^n(L)$ is actually an isomorphism for $n\leq 3$ by \cite[\S 4]{Asok14b}. An unpublished result of O. R\"ondigs states that this homomorphism is an isomorphism for $n=4$ as well. 
\end{rem}

\begin{cor}
Let $L$ be a field and let $n\geq 2$ be such that $\KMW_n(L)$ is $\psi_{\mu(n)}!$-torsion free. Then, $\KMW_n(L)$ injects into $\GW_n^n(L)$.
\end{cor}

\begin{rem}
As an example, we may consider $\RR$, or actually any real closed field. Indeed, we know from \cite[Proposition 2.2]{Kolderup19} that $\KMW_n(\RR)\simeq \ZZ\oplus D$, where $D$ is a uniquely divisible group. It follows that $\KMW_n(\RR)$ always injects into $\GW_n^n(\RR)$, extending \cite[Example 4.3.3]{Asok14b}. Another example is given by $F=\QQ$ for $n\geq 3$ by \cite[Proposition 2.4]{Kolderup19}, or by any algebraically closed field with $n\geq 2$.
\end{rem}

\section{Appendix: The threefold product} \label{sec:appendix}

The purpose of this appendix is to explicitly decompose the threefold product
\[
(U,\varphi)\otimes (U,\varphi)\otimes (U,\varphi)
\]
where $(U,\varphi)$ is a symplectic bundle of rank $2$. The first lemma is obvious and we omit its proof. 

\begin{lm}
Let $U$ be a rank $2$ bundle and let $i:\wedge^2U\to U\otimes U$ be the homomorphism given on sections by $s_1\wedge s_2\mapsto s_1\otimes s_2-s_2\otimes s_1$. Then, we have an exact sequence
\[
0\to \wedge^2U\stackrel{i}\to U^{\otimes 2}\to \mathrm{Sym}^2U\to 0.
\]
\end{lm}

Tensoring the above by $U$ (say on the right), we obtain an exact sequence
\[
0\to (\wedge^2U)\otimes U\xrightarrow{i\otimes 1} U^{\otimes 3}\to (\mathrm{Sym}^2U)\otimes U\to 0.
\]
Now, we may define a homomorphism $j:(\wedge^2U)\otimes U\to (\mathrm{Sym}^2U)\otimes U$ on sections by 
\[
(s_1\wedge s_2)\otimes s_3\mapsto (s_1 s_3)\otimes s_2 - (s_2 s_3)\otimes s_1.
\]

\begin{lm}
We have an exact sequence
\[
0\to (\wedge^2U)\otimes U\stackrel{j}\to (\mathrm{Sym}^2U)\otimes U\to \mathrm{Sym}^3U\to 0.
\]
\end{lm}

\begin{proof}
We may work locally, and thus suppose that $U=R^2$ (where $R$ is a local ring) with basis $e,f$. We then have a basis $e\wedge f$ of $(\wedge^2U)$ and a basis $(e\wedge f)\otimes e, (e\wedge f)\otimes f$ of $(\wedge^2U)\otimes U$. Their respective images are $(e e)\otimes f-(e f)\otimes e$ and $(e f)\otimes f-(f f)\otimes e$ which are linearly independent. Now, the composite of the morphisms are clearly trivial, and therefore the sequence is exact by an easy dimension count.
\end{proof}

\begin{lm}\label{lem:firstfactor}
Suppose that $k$ is of characteristic different from $2$. Then, the restriction of $\varphi^{\otimes 3}$ to $(\wedge^2U)\otimes U$ along  
\[
(i\otimes 1):(\wedge^2U)\otimes U\to U^{\otimes 3}
\] 
is isometric to $\langle 2\rangle \varphi:U\to U^\vee$. In particular, it is non degenerate.
\end{lm}

\begin{proof}
Again, we suppose that we work over a local ring $R$, and thus that $U$ has a basis $e,f$. The form $\varphi$ is then characterized by $\psi:\wedge^2U\to R$ given by $\psi(e\wedge f)=\varphi(e,f)$. An easy computation shows that 
\[
(i\otimes 1)^\vee(\varphi^{\otimes 3})(i\otimes 1)
\]
is characterized by $((e\wedge f)\otimes e)\wedge ((e\wedge f)\otimes f)\mapsto 2\varphi(e,f)^3$. 
We can now define an isomorphism $U\to (\wedge^2U)\otimes U$ by $s\mapsto \psi^{-1}(1)\otimes s$ and check that the restriction is precisely $\langle 2\rangle \varphi$.
\end{proof}

\begin{rem}
In characteristic $2$, the above lemma shows that $(\wedge^2U)\otimes U$ is a sublagrangian of $(U^{\otimes 3},\varphi^{\otimes 3})$.
\end{rem}

Consequently, we see that if $k$ is of characteristic different from $2$, then we get a decomposition $(U^{\otimes 3},\varphi^{\otimes 3})\simeq (U,\langle 2\rangle\varphi)\perp ((\mathrm{Sym}^2U)\otimes U,\varphi^\prime)$ for some form $\varphi^\prime$ that we now determine. We have an obvious homomorphism $\mathrm{Sym}^2U\to U^{\otimes 2}$ (always under the hypothesis that $k$ is of characteristic different from $2$) given by $s_1 s_2\mapsto s_1\otimes s_2+s_2\otimes s_1$. This induces a section of $ U^{\otimes 3}\to (\mathrm{Sym}^2U)\otimes U$ and we may consider the form induced by this section. However, it is easier to consider the composite of $(\wedge^2U)\otimes U\stackrel{j}\to (\mathrm{Sym}^2U)\otimes U$ with this section and the form induced on $(\wedge^2U)\otimes U$.

\begin{lm}
Suppose that $k$ is of characteristic different from $2,3$. The symplectic form induced on $\wedge^2U\otimes U$ by $(U^{\otimes 3},\varphi^{\otimes 3})$ under the homomorphism
\[
m:\wedge^2U\otimes U\to U^{\otimes 3}
\]
defined by $(s_1\wedge s_2)\otimes s_3\mapsto s_1\otimes s_3\otimes s_2+s_3\otimes s_1\otimes s_2-s_2\otimes s_3\otimes s_1-s_3\otimes s_2\otimes s_1$ is isometric to $\langle 6\rangle(U,\varphi)$.
\end{lm}

\begin{proof}
The same proof as in Lemma \ref{lem:firstfactor} shows that the form on $\wedge^2U\otimes U$ is locally characterized by $((e\wedge f)\otimes e)\wedge ((e\wedge f)\otimes f)\mapsto 6\varphi(e,f)^3$.
\end{proof}

As a consequence, we have 
\begin{equation}\label{eqn:decomposition}
(U^{\otimes 3},\varphi^{\otimes 3})\simeq (U,\langle 2\rangle\varphi)\perp (U,\langle 6\rangle\varphi) \perp (\mathrm{Sym}^3U,\psi)
\end{equation}
where $\psi$ is induced by $(U^{\otimes 3},\varphi^{\otimes 3})$ under the choice of a reasonable section $U^{\otimes 3}\to \mathrm{Sym}^3U$. There is a canonical such map, given on sections by 
\[
s_1\otimes s_2\otimes s_3\mapsto \sum_{\sigma\in S_3}s_{\sigma(1)}\otimes s_{\sigma(2)}\otimes s_{\sigma(3)}
\]
under the hypothesis that $\mathrm{char}(k)\neq 2,3$.

\bibliographystyle{amsalpha}
\bibliography{borel}

\end{document}